\newcounter{maint}
\numberwithin{equation}{section}
\newcommand{\ydh}{{}^H_H\mathcal{YD}}
\begin{document}

\newtheorem{theorem}{Theorem}[section]

\newtheorem{lemma}[theorem]{Lemma}

\newtheorem{corollary}[theorem]{Corollary}
\newtheorem{proposition}[theorem]{Proposition}

\theoremstyle{remark}
\newtheorem{remark}[theorem]{Remark}

\theoremstyle{definition}
\newtheorem{definition}[theorem]{Definition}

\theoremstyle{definition}
\newtheorem{conjecture}[theorem]{Conjecture}

\newtheorem{example}[theorem]{Example}

%%%%%%%%%%%%%%%%%%%%%%%%%%%%%%%%%%%%%%%%%%%%%%%
%%%%%%%%%%%%%%%%%%%%%%%%%%%%%%%%%%%%%%%%%%%%%%%%%
\def\k{\Bbbk}
\def\I{\mathbb{I}}
\def\ufo{\mathfrak{ufo}}
\newcommand{\Dchaintwo}[4]{
\rule[-3\unitlength]{0pt}{8\unitlength}
\begin{picture}(14,5)(0,3)
\put(2,4){\ifthenelse{\equal{#1}{l}}{\circle*{4}}{\circle{4}}}
\put(4,4){\line(2,0){20}}
\put(26,4){\ifthenelse{\equal{#1}{r}}{\circle*{4}}{\circle{4}}}
\put(2,10){\makebox[0pt]{\scriptsize #2}}
\put(14,8){\makebox[0pt]{\scriptsize #3}}
\put(26,10){\makebox[0pt]{\scriptsize #4}}
\end{picture}}
\title[The Nichols algebra $\mathfrak{B}(V_{abe})$ and a class of combinatorial numbers]
{The Nichols algebra $\mathfrak{B}(V_{abe})$ and a class of combinatorial numbers}

\author[Shi]{Yuxing Shi }
\address{School of Mathematics and Statistics, Jiangxi Normal University,  Nanchang 330022, P. R. China}\email{yxshi@jxnu.edu.cn}

\subjclass[2010]{16T05, 16T25, 17B22}
%\date{October 22, 2013}
\thanks{
%2010 Mathematics Subject Classification: 16T05.\\
\textit{Keywords:} Nichols algebra; Hopf algebra; Suzuki algebra.
\\
This work was partially supported by
 Foundation of Jiangxi Educational Committee (No.12020447)
 and Natural Science Foundation of Jiangxi Normal University(No. 12018937)
}

\begin{abstract}
We investigate the Nichols algebra $\mathfrak{B}(V_{abe})$
which are from the  Yetter-Drinfeld category of  Suzuki algebras.
The  $4n$ and 
$n^2$ dimensional Nichols algebras, first appeared in \cite{Andruskiewitsch2018},  are obtained again via a different method.  
And the connection between the Nichols algebra $\mathfrak{B}(V_{abe})$ and a class of combinatorial numbers on the subgroups of  symmetric groups is established. 
\end{abstract}
\maketitle

\section{introduction}

Nichols algebras appeared for the first time in W. Nichols' paper\cite{nichols1978bialgebras}, which aimed to construct new examples of Hopf algebras.   They also arose independently in 
Woronowicz, Lusztig, and Rosso's works \cite{woronowicz1989differential}\cite{MR1227098}\cite{MR1632802}, via the contexts of non-commutative differential calculus, 
invariant bilinear form and quantum  shuffles respectively. 

Nichols algebras over group algebras are of group type. Determining all finite dimensional  
group type Nichols algebras is a crucial important step for the classification of pointed Hopf algebras\cite{andruskiewitsch2001pointed}. Heckenberger classified all finite dimensional 
Nichols algebras of diagonal type(Abelian group type)\cite{heckenberger2009classification}, 
via the Weyl groupoid introduced in \cite{MR2207786} and arithmetic root systems. Angiono determined the defining relations of the finite dimensional Nichols algebras of diagonal type in \cite{MR3420518}\cite{Angiono2013}.  In general, it's difficult to decide whether a given Nichols algebra of non-diagonal  type is finite dimensional. Andruskiewitsch and Gra{\~n}a established a bridge between group type Nichols algebras and racks
\cite{Andruskiewitsch2003MR1994219}, so the study on  Nichols algebras of non-Abelian group type(also called rack type) can be carried out in the framework of racks. 

When we dealt with Nichols algebras over the Suzuki algebra $A_{Nn}^{\mu\lambda}$\cite{Shi2019}\cite{Shi2020even} 
\cite{Shi2020odd}, we came across the Nichols algebra $\mathfrak{B}(V_{abe})$
of non-group type. After we almost had finished the work, we found that our work is overlapped with 
Andruskiewitsch
 and Giraldi's work \cite[section 3.7]{Andruskiewitsch2018}. 
 Since our approach is different and the problem of determining the dimension of 
 $\mathfrak{B}(V_{abe})$ is not completely solved, our work is still valuable. 
 Besides, we establish a connection  between the Nichols algebra $\mathfrak{B}(V_{abe})$ and a class of combinatorial numbers on the subgroups of the symmetric groups. 

\section{the Nichols algebra $\mathfrak{B}(V_{abe})$}
\newcommand{\F}{\mathfrak{F}}
\newcommand{\tildeF}{\tilde{\mathfrak{F}}}
\newcommand{\G}{\mathcal{G}}

\subsection{Nichols algebra}
\begin{definition}\cite[Definition 2.1]{andruskiewitsch2001pointed}%\cite[Def. 2.1]{AS} 
\label{defNicholsalgebra}
Let $H$ be a Hopf algebra and $V \in \ydh$. A braided $\mathbb{N}$-graded 
Hopf algebra $R = \bigoplus_{n\geq 0} R(n) \in \ydh$  is called 
the \textit{Nichols algebra} of $V$ if 
\begin{enumerate}
 \item[(i)] $\k \simeq R(0)$, $V\simeq R(1) \in \ydh$,
 \item[(ii)] $R(1) = \mathcal{P}(R)
=\{r\in R~|~\Delta_{R}(r)=r\otimes 1 + 1\otimes r\}$.
 \item[(iii)] $R$ is generated as an algebra by $R(1)$.
\end{enumerate}
In this case, $R$ is denoted by $\mathfrak{B}(V) = \bigoplus_{n\geq 0} \mathfrak{B}^{n}(V) $.    
\end{definition}
\begin{remark}
The Nichols algebra 
$\mathfrak{B}(V)$ is completely determined by the braiding.
More precisely, as proved in  \cite{MR1396857} and
noted in \cite{andruskiewitsch2001pointed},
$$\mathfrak{B}(V)=\mathrm{K}\oplus V\oplus\bigoplus\limits_{n=2}^\infty V^{\otimes n} /
\ker\mathfrak{S}_n=T(V)/\ker\mathfrak{S},$$
where $\mathfrak{S}_{n,1}\in \mathrm{End}_\k \left(V^{\otimes (n+1)}\right)$, 
$\mathfrak{S}_{n}\in \mathrm{End}_\k \left(V^{\otimes n}\right)$,
$$\mathfrak{S}_{n,1}\coloneqq\mathrm{id}+c_n+c_{n-1}c_n+\cdots
+c_1\cdots c_{n-1}c_n=\mathrm{id}+\mathfrak{S}_{n-1,1}c_n,$$
$$\mathfrak{S}_1\coloneqq\mathrm{id}, \quad \mathfrak{S}_2\coloneqq\mathrm{id}+c, \quad
\mathfrak{S}_n\coloneqq \mathfrak{S}_{n-1,1}(\mathfrak{S}_{n-1}\otimes \mathrm{id}).$$
\end{remark}

\subsection{The Nichols algebra $\mathfrak{B}(V_{abe})$}
\begin{lemma}\label{undiagonal_BVS}
Let $V=\k v_1\oplus \k v_2$ be a vector space,  
$c\in \mathrm{End}_{\k }\left(V\otimes V\right)$ and suppose  $ab\gamma e\neq 0$ such that  
\begin{align*}
c(v_1\otimes v_1)&=a v_2\otimes v_2,\quad 
&c(v_1\otimes v_2)&=b   v_1\otimes v_2,\\
c(v_2\otimes v_1)&=\gamma v_2\otimes v_1,\quad 
&c(v_2\otimes v_2)&=e   v_1\otimes v_1.
\end{align*}
\begin{enumerate}
\item $(V,c)$ is a braided vector space iff  $b=\gamma$. 
\item The braided vector space $(V,c)$ is of diagonal type iff $b^2=ae$. 
\item Suppose $b=\gamma$,  $b^2=ae$, denote 
         \[
        w_1=v_1+\sqrt{\frac{a}{b}}v_2,\quad  w_2=v_1-\sqrt{\frac{a}{b}}v_2, 
         \] 
       then $(V,c)$ is a braided vector of diagonal type, where the braiding $c$ on the base $(w_1, w_2)$
       is given by 
       \begin{align*}
       c(w_1\otimes w_1)&=b w_1\otimes w_1,\quad 
       &c(w_1\otimes w_2)&=-b   w_2\otimes w_1,\\
       c(w_2\otimes w_1)&=-b w_1\otimes w_2,\quad 
       &c(w_2\otimes w_2)&=b   w_2\otimes w_2.
      \end{align*}
\end{enumerate}
\end{lemma}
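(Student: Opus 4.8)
The plan is to handle the three statements in turn, all by explicit computation on the product bases of $V^{\otimes 2}$ and $V^{\otimes 3}$, using that $c$ acts on basis vectors by scalars. For (1) I would first record that $c$ is invertible exactly because $ab\gamma e\neq 0$: in the ordered basis $(v_1\otimes v_1,\,v_1\otimes v_2,\,v_2\otimes v_1,\,v_2\otimes v_2)$ its matrix is anti-diagonal on $\{v_1\otimes v_1,\,v_2\otimes v_2\}$ and diagonal on $\{v_1\otimes v_2,\,v_2\otimes v_1\}$, so $\det c=-ae b\gamma\neq 0$. It then remains to impose the braid equation $(c\otimes\mathrm{id})(\mathrm{id}\otimes c)(c\otimes\mathrm{id})=(\mathrm{id}\otimes c)(c\otimes\mathrm{id})(\mathrm{id}\otimes c)$ on the eight basis vectors $v_i\otimes v_j\otimes v_k$. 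Applying both composites to $v_1\otimes v_1\otimes v_1$ produces $ae\gamma\,v_1\otimes v_1\otimes v_1$ on the left and $aeb\,v_1\otimes v_1\otimes v_1$ on the right, so equality already forces $\gamma=b$; this gives necessity. For sufficiency I would substitute $\gamma=b$ and verify the remaining seven vectors, which pair up under the symmetry $v_1\leftrightarrow v_2,\ a\leftrightarrow e$ (an automorphism of the braiding once $\gamma=b$), so that only about half of them need independent checking, each collapsing to an identity that holds automatically when $\gamma=b$.

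For (2) the decisive observation is that a basis $\{u_1,u_2\}$ exhibits diagonal type only if each $u_i\otimes u_i$ is an eigenvector of $c$. Writing $u=xv_1+yv_2$ and expanding, one gets $c(u\otimes u)=y^2e\,v_1\otimes v_1+xyb\,v_1\otimes v_2+xy\gamma\,v_2\otimes v_1+x^2a\,v_2\otimes v_2$; comparing with $\lambda\,(u\otimes u)$ the middle coefficients force $\lambda=b$ (and $\lambda=\gamma$, consistent since we are already braided), while the outer coefficients give $y^2e=bx^2$ and $x^2a=by^2$, whose product yields $ae=b^2$. Because $c(v_1\otimes v_1)=a\,v_2\otimes v_2$ and $c(v_2\otimes v_2)=e\,v_1\otimes v_1$ show that neither $v_1\otimes v_1$ nor $v_2\otimes v_2$ is an eigenvector, a diagonalizing $u_i$ cannot be proportional to $v_1$ or $v_2$; hence $xy\neq 0$ is automatic and $b^2=ae$ is forced. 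The converse is precisely the content of (3).

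For (3), under $\gamma=b$ and $ae=b^2$ I would put $t=\sqrt{a/b}$, so that $t^2=a/b$, $t^2e=b$ and $t^2b=a$, and then expand $c$ on the four products $w_i\otimes w_j$ in the original basis. For example $c(w_1\otimes w_1)=b\,v_1\otimes v_1+tb\,v_1\otimes v_2+tb\,v_2\otimes v_1+a\,v_2\otimes v_2=b\,(w_1\otimes w_1)$, and the cross term computes to $c(w_1\otimes w_2)=-b\,w_2\otimes w_1$; the remaining two identities follow by the substitution $t\mapsto -t$ that interchanges $w_1$ and $w_2$. This reproduces exactly the stated diagonal braiding matrix and in particular shows $(V,c)$ is of diagonal type, completing the reverse direction of (2).

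I expect the only genuine labor to be the case-by-case verification of the braid equation in (1); it is purely mechanical but must be carried out with care on enough basis vectors to establish sufficiency once $\gamma=b$. Conceptually the key point is the eigenvector criterion in (2): the requirement that $u\otimes u$ be a $c$-eigenvector both pins down the condition $b^2=ae$ and dictates the ratio $y/x=\pm\sqrt{a/b}$, thereby explaining the explicit choice of $w_1,w_2$ used in (3).
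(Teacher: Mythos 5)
Your proposal is correct. Note that the paper itself states this lemma without any proof---it is treated as a routine verification---so there is no "paper proof" to diverge from; your write-up simply supplies the omitted computation, and it does so accurately. The three pillars of your argument all check out: the determinant $\det c=-ab\gamma e\neq 0$ guarantees invertibility; evaluating both sides of the braid equation on $v_1\otimes v_1\otimes v_1$ gives $ae\gamma$ versus $abe$, forcing $\gamma=b$, and with $\gamma=b$ the remaining seven basis vectors of $V^{\otimes 3}$ do verify the equation (each side collapses to $abe$ or $b^3$ times the appropriate basis vector, and the $v_1\leftrightarrow v_2$, $a\leftrightarrow e$ symmetry halves the work as you say); the eigenvector criterion in (2) is the right necessity argument, and your observation that a diagonalizing basis vector cannot be proportional to $v_1$ or $v_2$ (since $c(v_i\otimes v_i)$ is a nonzero multiple of the \emph{other} squared basis vector) is exactly the point needed to justify $xy\neq 0$ before dividing; and the identities $t^2e=b$, $t^2b=a$ make the computation of $c(w_i\otimes w_j)$ in (3) come out to the stated diagonal braiding, which also closes the converse direction of (2). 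The only stylistic remark: in (2) you invoke $\lambda=b$ from the middle coefficients before establishing $xy\neq 0$; reordering those two sentences would make the logic airtight as written, but the ingredient is present, so this is presentation rather than a gap.
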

\begin{remark}
We denote the braided vector space as $V_{abe}$ in following. And $V_{abe}$ is isomorphic to 
$V_{ae\,b\,1}$ via $v_1\mapsto \sqrt{e}v_1$, $v_2\mapsto v_2$.
\end{remark}

\begin{corollary}
Let  $b^2=ae$, then  $\mathfrak{B}(V_{abe})$ is of diagonal type and 
\[
\dim \mathfrak{B}(V_{abe})
=\left\{\begin{array}{ll}
4, & b=-1, (\mathfrak{B}(V_{abe}) \,\,\,\text{is of Cartan type $A_1\times A_1$}),\\
27, & b^3=1\neq b, (\mathfrak{B}(V_{abe})\,\,\, \text{is of Cartan type $A_2$}),\\
\infty, & \text{otherwise}.
\end{array}\right.
\]
\end{corollary}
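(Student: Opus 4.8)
The plan is to transport the whole question to the classification of finite-dimensional Nichols algebras of diagonal type. Since $b^2=ae$ by hypothesis, Lemma~\ref{undiagonal_BVS}(3) applies: in the basis $(w_1,w_2)$ the braiding is diagonal with matrix
\[
(q_{ij})=\begin{pmatrix} b & -b\\ -b & b\end{pmatrix},
\]
so the only data that survive into the Nichols algebra are the diagonal entries $q_{11}=q_{22}=b$ and the product $q_{12}q_{21}=b^2$. First I would settle the degenerate possibilities: if $b$ is not a root of unity, or if $b=1$, then $q_{11}\in\{b,1\}$ is not a nontrivial root of unity, the subalgebra generated by $w_1$ is a polynomial algebra, and $\dim\mathfrak{B}(V_{abe})=\infty$. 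From now on I therefore assume that $b$ has finite order $N\ge 2$.

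The key observation, which lets me bypass the exceptional (non-Cartan) rows of Heckenberger's list entirely, is that $(q_{ij})$ is \emph{always} of Cartan type. Indeed, from $q_{11}=q_{22}=b$ and $q_{12}q_{21}=b^2$ the Cartan integers come out as $a_{12}=a_{21}=2-N$, and since $b^{N}=1$ one checks
\[
q_{11}^{\,a_{12}}=b^{\,2-N}=b^{2}=q_{12}q_{21},
\]
so the Cartan condition holds exactly, with symmetric generalized Cartan matrix $\left(\begin{smallmatrix} 2 & 2-N\\ 2-N & 2\end{smallmatrix}\right)$. Now I would invoke the criterion --- due to Lusztig, Rosso and Andruskiewitsch--Schneider and completed by Heckenberger \cite{heckenberger2009classification,andruskiewitsch2001pointed} --- that for a braiding of Cartan type with each $q_{ii}$ a root of unity one has $\dim\mathfrak{B}(V)<\infty$ if and only if the Cartan matrix is of finite type. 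For our symmetric rank-two matrix this happens precisely when $2-N\in\{0,-1\}$, that is $N\in\{2,3\}$: the value $N=2$ ($b=-1$) gives the disconnected diagram $A_1\times A_1$, the value $N=3$ ($b^3=1\neq b$) gives $A_2$, and every $N\ge 4$ produces an affine ($N=4$, i.e.\ $A_1^{(1)}$) or indefinite ($N\ge 5$) Cartan matrix with $\dim\mathfrak{B}(V_{abe})=\infty$. This finiteness criterion is the substantive input, and I expect it to be the only real obstacle; once the braiding is recognized as being of Cartan type, the dichotomy is forced and the rest is bookkeeping.

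It remains to compute the two finite dimensions, for which I would use $\dim\mathfrak{B}(V_{abe})=\prod_{\beta\in\Phi^{+}}\operatorname{ord}(q_{\beta\beta})$, the product running over the positive roots of the associated root system. For $A_1\times A_1$ the algebra factors as $\mathfrak{B}(\k w_1)\otimes\mathfrak{B}(\k w_2)$, a tensor product of two exterior algebras (each $q_{ii}=-1$), giving $\dim=2\cdot 2=4$. For $A_2$ there are three positive roots, and a short computation of the diagonal self-braidings --- for instance for the highest root $\beta=\alpha_1+\alpha_2$ one gets $q_{\beta\beta}=q_{11}q_{12}q_{21}q_{22}=b^{4}=b$ --- shows that each root vector is nilpotent of order $N=3$, whence $\dim=3^{3}=27$. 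Thus the statement reduces entirely to the cited classification of Cartan-type (arithmetic) root systems together with these two elementary dimension counts.
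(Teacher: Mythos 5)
Your proof is correct and follows essentially the route the paper itself intends: the paper states this corollary without any written proof, treating it as an immediate consequence of Lemma \ref{undiagonal_BVS}(3) (diagonalization to $q_{11}=q_{22}=b$, $q_{12}q_{21}=b^{2}$) combined with the standard Cartan-type finiteness criterion from \cite{andruskiewitsch2001pointed} and \cite{heckenberger2009classification}, which is exactly the argument you spell out. Your explicit identification of the Cartan matrix ($a_{12}=a_{21}=2-N$, finite type precisely for $N\in\{2,3\}$) and the dimension counts $2\cdot 2=4$ for $A_1\times A_1$ and $3^{3}=27$ for $A_2$ correctly supply the details the paper leaves implicit.
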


\subsection{The dimension of  $\mathfrak{B}(V_{abe})$ on special cases}
Let $N=\{1, 2\}$,   define an  action of the symmetric group $\mathbb{S}_2$ on $N^2$ as 
\[
s_1\cdot 11=22,\quad s_1\cdot 22=11,\quad
s_1\cdot 12=12,\quad s_1\cdot 21=21.
\]
Then there is an action of the symmetric group $\mathbb{S}_n$ on $N^n$ induced by the action of 
$\mathbb{S}_2$ on $N^2$. $\Phi: \mathbb{S}_n\rightarrow \mathfrak{S}_n$ via $s_i\mapsto c_i$
is a lifting of $\mathbb{S}_n$ on the braid group $\mathfrak{S}_n$.    
As for convenience, we denote $\Phi_\sigma$  as $\Phi(\sigma)$  for any 
$\sigma \in \mathbb{S}_n$. 

Let $x=i_1\cdots i_n$, $y=j_1\cdots j_n\in N^n$, define 
$v_x=v_{i_1}\cdots v_{i_n}$, $v_y=v_{j_1}\cdots v_{j_n}\in V_{abe}^n$,
$\F(x|y)=\{\sigma\in \mathbb{S}_n\mid \sigma\cdot x=y\}$ and 
$\tildeF(x|y)\in \k$ such that 
\[
\sum_{\sigma\in \F(x|y)}\Phi_\sigma\left(v_x\right)=\tildeF(x|y)v_y.
\]
Let $\bar{i}_k$ equal to 2 in case of $i_k=1$ and 1 otherwise  for $1\leq k\leq n$, and 
$\bar{x}=\bar{i}_1\cdots \bar{i}_n$. Denote 
$
\mathcal{O}(x)=\{y\mid \sigma\cdot x=y, \sigma\in \mathbb{S}_n\},
$
Then it's easy to see that 
\begin{lemma}\label{tildefxy}
\begin{enumerate}
\setlength{\itemindent}{-0.6em}
\item $\mathfrak{S}_n(v_x)=\sum_{y\in \mathcal{O}(x)}\tildeF(x|y)v_y$,
\item $\tildeF(x|y)=\tildeF(y|x)\big|_{a=e \atop e=a}$,  
\item $\tildeF(\bar{x}|\bar{y})
          =\tildeF(x|y)\big|_{a=e \atop e=a}$.
\item Denote $\vec{x}=i_n\cdots i_1$ if $x=i_1\cdots i_n$, 
then $\tildeF(x|y)=\tildeF(\vec{x}\, | \vec{y})$.
\end{enumerate}
\end{lemma}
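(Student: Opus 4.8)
The unifying device is that the braiding acts almost diagonally on the monomial basis. Writing $\mu(1,1)=a$, $\mu(2,2)=e$, $\mu(1,2)=\mu(2,1)=b$, a direct inspection of the four formulas for $c$ (which uses $\gamma=b$) shows that for every word $w\in N^n$ one has $c_i(v_w)=\mu(w_i,w_{i+1})\,v_{s_i\cdot w}$, i.e. each $c_i$ sends a basis vector to a scalar multiple of another basis vector. Consequently, for any $\sigma\in\mathbb{S}_n$ the operator $\Phi_\sigma$ (well defined on basis vectors because $c$ satisfies the braid equation, so its value is independent of the chosen reduced word by Matsumoto's theorem) satisfies $\Phi_\sigma(v_x)=\lambda_\sigma(x)\,v_{\sigma\cdot x}$ for a monomial scalar $\lambda_\sigma(x)$, obtained as the product of the factors $\mu$ collected along any reduced expression $\sigma=s_{k_1}\cdots s_{k_\ell}$ applied successively to $x$. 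By definition one then has $\tildeF(x|y)=\sum_{\sigma\in\F(x|y)}\lambda_\sigma(x)$. I record one elementary fact used repeatedly: since the $\mathbb{S}_2$-action interchanges $11\leftrightarrow 22$ and fixes $12,21$, we have $\mu(s_i\cdot w)=\mu(w)\big|_{a\leftrightarrow e}$ and likewise $\mu(\bar w)=\mu(w)\big|_{a\leftrightarrow e}$, while mixed pairs keep the factor $b$.

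For item (1) I would use the classical identity $\mathfrak{S}_n=\sum_{\sigma\in\mathbb{S}_n}\Phi_\sigma$, which follows from the recursion defining $\mathfrak{S}_{n,1}$ and $\mathfrak{S}_n$ by identifying the descending products $c_{n}\cdots c_{n-j}$ with the Matsumoto lifts of the minimal length coset representatives. Applying this to $v_x$ and grouping the permutations according to the value $y=\sigma\cdot x\in\mathcal{O}(x)$ gives $\mathfrak{S}_n(v_x)=\sum_{y\in\mathcal{O}(x)}\big(\sum_{\sigma\in\F(x|y)}\lambda_\sigma(x)\big)v_y=\sum_{y\in\mathcal{O}(x)}\tildeF(x|y)v_y$, which is exactly (1).

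Items (2) and (3) are each a bijection of $\mathbb{S}_n$ together with a bookkeeping of scalars. For (2) the map $\sigma\mapsto\sigma^{-1}$ is a bijection $\F(x|y)\to\F(y|x)$; writing a reduced word for $\sigma$ and reading it backwards for $\sigma^{-1}$, the reverse path retraces the intermediate words in the opposite order, so at each crossing the factor $\mu$ is evaluated on the $s_i$-image of the corresponding forward word. By the elementary fact above this replaces each $a$ by $e$ and each $e$ by $a$ while fixing the $b$'s, whence $\lambda_{\sigma^{-1}}(y)=\lambda_\sigma(x)\big|_{a\leftrightarrow e}$; summing over $\F(x|y)$ yields (2). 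For (3) I would first check that the bar operation commutes with the $\mathbb{S}_n$-action, so that $\F(\bar x|\bar y)=\F(x|y)$ as subsets of $\mathbb{S}_n$; since $\mu(\bar w)=\mu(w)\big|_{a\leftrightarrow e}$ along the whole path, $\lambda_\sigma(\bar x)=\lambda_\sigma(x)\big|_{a\leftrightarrow e}$, and summing gives (3).

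Item (4) is where I expect the real work to lie. The key input is the flip-symmetry of the braiding: a short check (again using $\gamma=b$) gives $\tau c\tau=c$, where $\tau$ is the transposition of the two tensor factors. Letting $R$ reverse all $n$ tensor factors, so that $R(v_x)=v_{\vec x}$, this symmetry upgrades to the operator identity $R\,c_i\,R=c_{n-i}$, hence $R\,\Phi_\sigma\,R=\Phi_{w_0\sigma w_0}$ with $w_0$ the longest element (which conjugates $s_i$ to $s_{n-i}$). One also verifies that reversal intertwines the action, $\overrightarrow{\tau\cdot x}=(w_0\tau w_0)\cdot\vec x$, so that $\sigma\mapsto w_0\sigma w_0$ is a bijection $\F(\vec x|\vec y)\to\F(x|y)$. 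Applying $\Phi_\sigma$ to $v_{\vec x}=R\,v_x$ and combining these identities gives $\lambda_\sigma(\vec x)=\lambda_{w_0\sigma w_0}(x)$ with \emph{no} exchange of $a$ and $e$, because reversal changes neither the letters nor, by flip-symmetry, the scalar at each crossing. Summing over $\F(\vec x|\vec y)$ and re-indexing by $\tau=w_0\sigma w_0$ then gives $\tildeF(\vec x|\vec y)=\tildeF(x|y)$. The main obstacle is organizing this last argument cleanly—keeping track of the order reversal of the reduced word, the conjugation by $w_0$, and the operator identity $R\,c_i\,R=c_{n-i}$ simultaneously—together with the easy but necessary verification that $\Phi_\sigma(v_x)$ is genuinely independent of the chosen reduced expression, which is what makes all the scalar identities unambiguous.
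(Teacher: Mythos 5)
Your proof is correct and takes essentially the same approach as the paper's: the symmetrizer decomposition $\mathfrak{S}_n=\sum_{\sigma}\Phi_\sigma$ for (1), the bijection $\sigma\mapsto\sigma^{-1}$ for (2), the equality $\mathfrak{F}(\bar{x}|\bar{y})=\mathfrak{F}(x|y)$ for (3), and, for (4), your operator identity $R\,c_i\,R=c_{n-i}$ combined with conjugation by $w_0$ is precisely the algebraic formulation of the paper's ``flip of the diagrams of braidings''. You simply carry out explicitly the scalar bookkeeping (the monomials $\lambda_\sigma$ and their behaviour under $a\leftrightarrow e$) that the paper's four-line proof leaves to the reader.
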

\begin{proof}
(1) It's easy to see. \\
(2) $\F(y|x)=\left\{w^{-1}\mid w\in \F(x|y) \right\}$.\\
(3) $\F(\bar{x}|\bar{y})=\F(x|y)$.\\
(4) There is a bijection between $\F(x|y)$ and $\F(\vec{x}\, | \vec{y})$
via a flip of the diagrams of braidings.  
\end{proof}

\begin{lemma}\label{shuffle}
\begin{align*}
\F\left(2^m(12)^k|2^m(12)^k\right)
&=\left(id^{\otimes m}\otimes \mathbb{S}_{2k-1, 1}\right)
     \F\left(2^{m-1}(21)^{k}|2^{m-1}(21)^k\right)\\
&\quad \cup\bigcup_{k_1=0}^{k_1\leq \frac{m-1}2}\prod_{i=m-2k_1}^{m+2k-1}s_i
             \F\left(2^{m-1}(21)^{k}| 2^{m-2k_1-1}1^{2k_1}(21)^{k}\right),\\
\F\left(2^m(21)^k|2^m(21)^k\right)
&=\left(id^{\otimes m}\otimes \mathbb{S}_{2k-1, 1}\right)
     \F\left(2^{m+1}(12)^{k-1}|2^{m+1}(12)^{k-1}\right)\\
&\quad \cup\bigcup_{k_1=0}^{k_1\leq \frac m2}\prod_{i=m-2k_1+1}^{m+2k-1}s_i
             \F\left(2^{m+1}(12)^{k-1}| 2^{m-2k_1}1^{2k_1}2(12)^{k-1}\right).     
\end{align*}
\end{lemma}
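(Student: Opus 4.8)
The plan is to translate the combinatorial action into a signed-permutation statement and then run a single ``last letter'' recursion. Since $s_1$ sends $11\mapsto22$, $22\mapsto11$ and fixes $12,21$, the operator $\Phi_{s_i}$ acts on a word by transposing the letters in positions $i,i+1$ and simultaneously flipping both (interchanging $1\leftrightarrow2$). Writing $N=\{1,2\}$ additively as $\mathbb{Z}/2$, this is exactly the wreath-product lift $s_i\mapsto(e_i+e_{i+1},\,s_i)\in(\mathbb{Z}/2)\wr\mathbb{S}_n$, so every $\sigma$ carries a \emph{flip vector} $\epsilon_\sigma\in(\mathbb{Z}/2)^n$ with $\epsilon_{s_i}=e_i+e_{i+1}$ and $\epsilon_{\sigma\tau}=\epsilon_\sigma+\sigma\!\cdot\!\epsilon_\tau$ (the second factor permuting coordinates). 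From this I would record the clean criterion that $\sigma\in\F(x|y)$ if and only if $y_j=x_{\sigma^{-1}(j)}+(\epsilon_\sigma)_j$ for all $j$; this reduces the whole lemma to tracking $\epsilon_\sigma$ and sidesteps the fact that $\Phi_{s_i}$ acts trivially on an unequal pair.

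The recursion is driven by the identity $2(12)^k=(21)^k2$, which gives $2^m(12)^k=\bigl(2^{m-1}(21)^k\bigr)2$; that is, $x=x'2$ is obtained from $x'=2^{m-1}(21)^k$ (of length $n-1$, where $n=m+2k$) by appending a single $2$ in the last slot. Dually $2^m(21)^k=\bigl(2^{m+1}(12)^{k-1}\bigr)1$, appending a $1$. I would then decompose $\F(x|x)$ through the preimage $p=\sigma^{-1}(n)$ of the last position, factoring $\sigma=\tau\rho$ with $\tau$ in the standard transversal $\{\,\prod_{i=p}^{n-1}s_i\,\}$ of $\mathbb{S}_n/\mathbb{S}_{n-1}$ (the subgroup fixing position $n$) and $\rho\in\mathbb{S}_{n-1}$.

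The two terms then correspond to the two regimes for $p$. If $p\in\{m+1,\dots,n\}$ lies in the tail block occupied by $(12)^k$, every generator $s_{m+1},\dots,s_{n-1}$ acts on an unequal pair and so fixes $x$; hence each such $\tau$ fixes $x$, the residual $\rho$ must lie in the genuine stabilizer $\F\bigl(2^{m-1}(21)^k\mid 2^{m-1}(21)^k\bigr)$, and the $2k$ transversal elements assemble into $\mathrm{id}^{\otimes m}\otimes\mathbb{S}_{2k-1,1}$ --- this is the first term. If instead $p\le m$ lies in the head block of $2$'s, I would compute $\epsilon_\tau$ for $\tau=\prod_{i=p}^{n-1}s_i$ and impose $(\tau^{-1}\!\cdot x)_n=x_n$ (forced because $\rho$ fixes slot $n$ without flipping it); this parity condition selects exactly $p=m-2k_1$ with $0\le k_1\le(m-1)/2$, makes $\tau=\prod_{i=m-2k_1}^{m+2k-1}s_i$, and shows the residual solves the off-diagonal equation $\F\bigl(2^{m-1}(21)^k\mid 2^{m-2k_1-1}1^{2k_1}(21)^k\bigr)$, the $2k_1$ head-$2$'s flipped by $\tau$ being recorded as the block $1^{2k_1}$ --- this is the second term. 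Exhaustiveness holds because every $\sigma$ has $p$ in one of the two ranges, and disjointness because $p=\sigma^{-1}(n)$ is an invariant of the coset, so the displayed union is the full set $\F(x|x)$. The second identity is proved verbatim with the appended $1$ from $2^m(21)^k=\bigl(2^{m+1}(12)^{k-1}\bigr)1$.

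The main obstacle is the flip bookkeeping in the head regime: one must prove rigorously that the slot-$n$ compatibility condition forces the offset to be \emph{even}, rules out all $p$ with $m-p$ odd (which give empty contributions), and pins down both the exact transversal word $\prod_{i=m-2k_1}^{m+2k-1}s_i$ and the exact modified target $2^{m-2k_1-1}1^{2k_1}(21)^k$. A useful consistency check, which I would carry out on a small case such as $m=3,\ k=1$, is that the cosets for $p=m$ and $p=m-2$ together with the tail term exhaust $\F(x|x)$, while the omitted value $p=m-1$ indeed yields $(\tau^{-1}\!\cdot x)_n=1\ne2$ and hence no solutions.
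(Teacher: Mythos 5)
Your proposal is correct and takes essentially the same route as the paper: the paper's proof likewise factors every stabilizer element through $\mathbb{S}_{m+2k}=\mathbb{S}_{m+2k-1,1}\left(\mathbb{S}_{m+2k-1}\otimes 1\right)$ (your transversal decomposition $\sigma=\tau\rho$ through the last strand), observes that $2^m(12)^k$ is stable under the tail shuffles $\mathrm{id}^{\otimes m}\otimes\mathbb{S}_{2k-1,1}$, and identifies the remaining head-shuffle cosets, including the even-offset condition, by the braid diagrams in Figures 1 and 2. Your wreath-product flip-vector bookkeeping is an algebraic substitute for those figures rather than a different decomposition, and it correctly reproduces both the excluded odd offsets and the targets $2^{m-2k_1-1}1^{2k_1}(21)^k$.
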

\begin{proof}
We give a proof for the first formula. Since $\mathbb{S}_{m+2k}=\mathbb{S}_{m+2k-1, 1}\left(\mathbb{S}_{m+2k-1}\otimes 1\right)$, we only need to make clear which elements of $N^{m+2k}$ are sent to $2^m(12)^k$ under the action of $\mathbb{S}_{m+2k-1, 1}$. In fact,  $2^m(12)^k$ is stable under the action of $id^{\otimes m}\otimes \mathbb{S}_{2k-1, 1}$, and the rest elements are shown in Figure \ref{2^m(12)^k}. The second formula is similar to prove, please refer the Figure \ref{2^m(21)^k}.
\end{proof}
%%%%%%%%%%%%%%%%%%%%%%%%%%
%%%%%%%%%%%%%%%%%%%%%%%%%%
%%%%%%%%%%%%%%%%%%%%%%%%%%
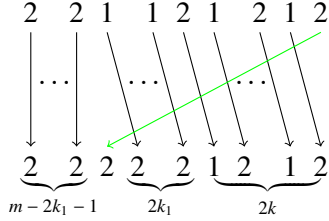
\begin{figure}[h!]
\begin{tikzpicture}
\draw[->](0,1.5)--(0,0);
\draw[->](0.6,1.5)--(0.6,0);
\draw[->](1,1.5)--(1.4,0);
\draw[->](1.6, 1.5)--(2, 0);
\draw[->](2, 1.5)--(2.4, 0);
\draw[->](2.4, 1.5)--(2.8, 0);
\draw[->](3, 1.5)--(3.4, 0);
\draw[->](3.4, 1.5)--(3.8, 0);
\draw[color=green][->](3.8, 1.5)--(1, 0);
\node[above] at (0,1.5) {$2$};
\node[above] at (0.6,1.5) {$2$}; 
\node[above] at (1, 1.5) {$1$};
\node[above] at (1.6, 1.5) {$1$};
\node[above] at (2, 1.5) {$2$};
\node[above] at (2.4, 1.5) {$1$};
\node[above] at (3, 1.5) {$2$};
\node[above] at (3.4, 1.5) {$1$};
\node[above] at (3.8, 1.5) {$2$};
\node[below] at (0,0) {$2$}; 
\node[below] at (0.6,0) {$2$};
\node[below] at (1,0) {$2$};
\node[below] at (1.4, 0) {$2$};
\node[below] at (2, 0) {$2$};
\node[below] at (2.4, 0) {$1$};
\node[below] at (2.8, 0) {$2$};
\node[below] at (3.4, 0) {$1$};
\node[below] at (3.8, 0) {$2$};
\node at (0.34, 0.8) {$\cdots$};
\node at (1.51, 0.8) {$\cdots$};
\node at (2.92, 0.8) {$\cdots$};
\node[rotate = 0] at (0.3, -0.5) {$\underbrace{\hspace{0.4cm}}$};
\node at (0.3, -0.8) {\tiny $m-2k_1-1$};
\node[rotate = 0] at (1.7, -0.5) {$\underbrace{\hspace{0.4cm}}$};
\node at (1.7, -0.8) {\tiny $2k_1$};
\node[rotate = 0] at (3.1, -0.5) {$\underbrace{\hspace{1.4cm}}$};
\node at (3.1, -0.8) {\tiny $2k$};
\end{tikzpicture}
\caption{$\F\left(2^{m}(12)^k|2^{m}(12)^k\right)$}
\label{2^m(12)^k}
\end{figure}
%%%%%%%%%%%%%%
%%%%%%%%%%%%%%%%%%%%%%%%%%
%%%%%%%%%%%%%%%%%%%%%%%%%%
%%%%%%%%%%%%%%%%%%%%%%%%%%
%%%%%%%%%%%%%%%%%%%%%%%%%%
%%%%%%%%%%%%%%%%%%%%%%%%%%
%%%%%%%%%%%%%%%%%%%%%%%%%%
%%%%%%%%%%%%%%%%%%%%%%%%%%
%%%%%%%%%%%%%%%%%%%%%%%%%%
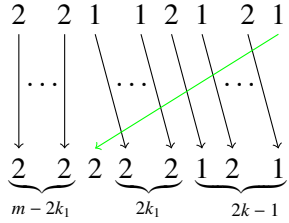
\begin{figure}[h!]
\begin{tikzpicture}
\draw[->](0,1.5)--(0,0);
\draw[->](0.6,1.5)--(0.6,0);
\draw[->](1,1.5)--(1.4,0);
\draw[->](1.6, 1.5)--(2, 0);
\draw[->](2, 1.5)--(2.4, 0);
\draw[->](2.4, 1.5)--(2.8, 0);
\draw[->](3, 1.5)--(3.4, 0);
%\draw[->](3.4, 1.5)--(3.8, 0);
\draw[color=green][->](3.4, 1.5)--(1, 0);
\node[above] at (0,1.5) {$2$};
\node[above] at (0.6,1.5) {$2$}; 
\node[above] at (1, 1.5) {$1$};
\node[above] at (1.6, 1.5) {$1$};
\node[above] at (2, 1.5) {$2$};
\node[above] at (2.4, 1.5) {$1$};
\node[above] at (3, 1.5) {$2$};
\node[above] at (3.4, 1.5) {$1$};
%\node[above] at (3.8, 1.5) {$2$};
\node[below] at (0,0) {$2$}; 
\node[below] at (0.6,0) {$2$};
\node[below] at (1,0) {$2$};
\node[below] at (1.4, 0) {$2$};
\node[below] at (2, 0) {$2$};
\node[below] at (2.4, 0) {$1$};
\node[below] at (2.8, 0) {$2$};
\node[below] at (3.4, 0) {$1$};
%\node[below] at (3.8, 0) {$2$};
\node at (0.34, 0.8) {$\cdots$};
\node at (1.51, 0.8) {$\cdots$};
\node at (2.92, 0.8) {$\cdots$};
\node[rotate = 0] at (0.3, -0.5) {$\underbrace{\hspace{0.4cm}}$};
\node at (0.3, -0.8) {\tiny $m-2k_1$};
\node[rotate = 0] at (1.7, -0.5) {$\underbrace{\hspace{0.4cm}}$};
\node at (1.7, -0.8) {\tiny $2k_1$};
\node[rotate = 0] at (2.91, -0.5) {$\underbrace{\hspace{1.2cm}}$};
\node at (3.1, -0.8) {\tiny $2k-1$};
\end{tikzpicture}
\caption{$\F\left(2^{m}(21)^k|2^{m}(21)^k\right)$}
\label{2^m(21)^k}
\end{figure}
%%%%%%%%%%%%%%
%%%%%%%%%%%%%%%%%%%%%%%%%%
%%%%%%%%%%%%%%%%%%%%%%%%%%
%%%%%%%%%%%%%%%%%%%%%%%%%%

\begin{lemma}\label{GroupF}
$\F\left(2^n|2^n\right)$ is subgroup of $\mathbb{S}_n$ generated by $t_i=s_is_{i+1}s_i$ for 
$1\leq i\leq n-2$. 
\end{lemma}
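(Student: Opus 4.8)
The plan is to establish the two inclusions separately. First, $\F(2^n\,|\,2^n)$ is by definition the stabiliser of the point $2^n$ for the given action of $\mathbb{S}_n$ on $N^n$, so it is automatically a subgroup. For the inclusion $\langle t_1,\dots,t_{n-2}\rangle\subseteq\F(2^n\,|\,2^n)$ I would verify, once and for all on the three consecutive letters in positions $i,i+1,i+2$, that $t_i=s_is_{i+1}s_i$ acts on a word by interchanging the letters in positions $i$ and $i+2$ and fixing every other position (this is the same local bookkeeping that produces Figures \ref{2^m(12)^k} and \ref{2^m(21)^k}). Since $2^n$ is constant, each $t_i$ fixes it, so $t_i\in\F(2^n\,|\,2^n)$.

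Next I would identify $G:=\langle t_1,\dots,t_{n-2}\rangle$ as an abstract subgroup. In the Coxeter group $\mathbb{S}_n$ one has $t_i=s_is_{i+1}s_i=(i,\,i+2)$, so $G$ is generated by the transpositions $(i,\,i+2)$ for $1\le i\le n-2$; the graph on $\{1,\dots,n\}$ carrying these edges splits into the odd and the even positions, whence $G=\mathbb{S}_{\{1,3,5,\dots\}}\times\mathbb{S}_{\{2,4,6,\dots\}}$ and $|G|=\lceil n/2\rceil!\,\lfloor n/2\rfloor!$. By orbit--stabiliser, $|\F(2^n\,|\,2^n)|=n!/|\mathcal{O}(2^n)|$, so the desired equality $\F(2^n\,|\,2^n)=G$ is equivalent to $|\mathcal{O}(2^n)|=\binom{n}{\lfloor n/2\rfloor}$. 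The inclusion just proved already gives $|\F(2^n\,|\,2^n)|\ge|G|$, i.e. $|\mathcal{O}(2^n)|\le\binom{n}{\lfloor n/2\rfloor}$, so it remains only to bound the orbit from below.

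For the lower bound I would induct on $n$. Restricting to the subgroup $\mathbb{S}_{n-1}\otimes 1$ acting on the first $n-1$ letters, the orbit of $2^n$ is a copy of $\mathcal{O}(2^{n-1})$ (the last letter is untouched), and the full orbit is swept out by the coset representatives $\mathbb{S}_{n-1,1}=\{1,s_{n-1},s_{n-2}s_{n-1},\dots,s_1\cdots s_{n-1}\}$ in $\mathbb{S}_n=\mathbb{S}_{n-1,1}(\mathbb{S}_{n-1}\otimes 1)$. Carrying out exactly the coset analysis of Lemma \ref{shuffle} to count how many of these representatives send $2^n$ to genuinely new words should yield $|\mathcal{O}(2^n)|=2\,|\mathcal{O}(2^{n-1})|$ for $n$ even and $|\mathcal{O}(2^n)|=\tfrac{n}{(n+1)/2}\,|\mathcal{O}(2^{n-1})|$ for $n$ odd, with base case $|\mathcal{O}(2^2)|=2$; these are precisely the recursions defining $\binom{n}{\lfloor n/2\rfloor}$. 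Together with the automatic upper bound this forces $|\mathcal{O}(2^n)|=\binom{n}{\lfloor n/2\rfloor}$, hence $|\F(2^n\,|\,2^n)|=|G|$, and since $G\subseteq\F(2^n\,|\,2^n)$ the two coincide.

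The crux, and the only delicate point, is this orbit count, precisely because the action is the twisted one ($s_i$ swaps $11\leftrightarrow 22$ but fixes $12$ and $21$) rather than the permutation action, so reachability is global rather than local. For instance the alternating words $1212\cdots$ and $2121\cdots$ are isolated fixed points lying outside $\mathcal{O}(2^n)$, and a configuration can be ``stuck'' while a superficially similar one moves. The real work in the induction is therefore to check that the representatives in $\mathbb{S}_{n-1,1}$ create \emph{distinct} new words and exactly the predicted number of them, with no unexpected coincidences or stalls; controlling this is exactly the purpose of Lemma \ref{shuffle} and the two accompanying figures.
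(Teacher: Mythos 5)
Your steps (1)--(3) are correct, and they form a genuinely different strategy from the paper's: the observation that $t_i=s_is_{i+1}s_i$ acts on $N^n$ exactly as the plain transposition of positions $i$ and $i+2$ (checked on the eight words of length three, and inherited by products because the twisted action is a group action) gives the inclusion $\langle t_1,\dots,t_{n-2}\rangle\subseteq\F\left(2^n|2^n\right)$ cleanly, and orbit--stabiliser correctly reduces the lemma to the count $\big|\mathcal{O}\left(2^n\right)\big|=\binom{n}{\lfloor n/2\rfloor}$. The genuine gap is that this count is never proved: you assert that a coset analysis ``should yield'' the recursions $\big|\mathcal{O}(2^n)\big|=2\big|\mathcal{O}(2^{n-1})\big|$ ($n$ even) and $\big|\mathcal{O}(2^n)\big|=\tfrac{2n}{n+1}\big|\mathcal{O}(2^{n-1})\big|$ ($n$ odd), and you yourself flag the verification as ``the real work.'' That verification is precisely the hard combinatorial content of the lemma, and it is \emph{harder} in the form you chose: the $n$ translates $g\cdot\left(\mathcal{O}(2^{n-1})\times\{2\}\right)$, $g\in\mathbb{S}_{n-1,1}$, overlap massively (for $n$ even, $n$ potential copies collapse to $2$), and controlling these coincidences is not an application of Lemma \ref{shuffle} but new work of comparable difficulty to the paper's entire argument. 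Note also that you cannot quote the orbit sizes from the paper, since there they appear as a corollary \emph{of} Lemma \ref{GroupF}; your route inverts the paper's logical order, which is exactly why the burden of this count falls on you.

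The gap is fillable, and more easily than your sketch suggests: count the stabiliser rather than the orbit. Lemma \ref{shuffle} with $k=0$ gives
\begin{equation*}
\F\left(2^n|2^n\right)=\bigcup_{k_1=0}^{k_1\leq \frac{n-1}{2}}\ \prod_{i=n-2k_1}^{n-1}s_i\ \F\left(2^{n-1}\,\big|\,2^{n-2k_1-1}1^{2k_1}\right).
\end{equation*}
The terms are pairwise disjoint, because the prefixes $s_{n-2k_1}\cdots s_{n-1}$ are representatives of distinct cosets of $\mathbb{S}_{n-1}\otimes 1$ while each $\F\left(2^{n-1}|\cdot\right)$ lies in $\mathbb{S}_{n-1}\otimes 1$; and each term is nonempty (apply $s_{n-2k_1}s_{n-2k_1+2}\cdots s_{n-2}$ to $2^{n-1}$), hence is a left coset of $\F\left(2^{n-1}|2^{n-1}\right)$ and has the same cardinality. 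Therefore $\big|\F\left(2^n|2^n\right)\big|=\lceil n/2\rceil\,\big|\F\left(2^{n-1}|2^{n-1}\right)\big|$, which by induction equals $\lceil n/2\rceil!\,\lfloor n/2\rfloor!=\big|\langle t_1,\dots,t_{n-2}\rangle\big|$; combined with your inclusion this forces equality, with no orbit analysis at all. For contrast, the paper never counts anything: it uses the same decomposition and rewrites each prefix times the coset representative $\prod_{k_2=0}^{k_1-1}s_{n-2(k_1-k_2)}$ of $\F\left(2^{n-1}\,\big|\,2^{n-2k_1-1}1^{2k_1}\right)$ as the product $\prod_{k_2=0}^{k_1-1}t_{n-2(k_1-k_2)}$, so that $\F\left(2^n|2^n\right)$ is exhibited directly as a union of $t$-cosets of $\F\left(2^{n-1}|2^{n-1}\right)$ and the equality of the two groups comes out of the induction itself.
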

\begin{proof}
When $n=2$, $\F\left(2^n|2^n\right)=\{1\}=\mathbb{S}_1$. When $n=3$, $\F\left(2^n|2^n\right)=\{1, s_1s_2s_1\}$. According to Lemma \ref{shuffle}, 
\begin{align*}
\F\left(2^n|2^n\right)
&=\bigcup_{k_1=0}^{k_1\leq \frac{n-1}2}\prod_{i=n-2k_1}^{n-1}s_i
             \F\left(2^{n-1}| 2^{n-2k_1-1}1^{2k_1}\right)\\
&=\bigcup_{k_1=0}^{k_1\leq \frac{n-1}2}\prod_{i=n-2k_1}^{n-1}s_i
      \prod_{k_2=0}^{k_1-1}s_{n-2(k_1-k_2)}\F\left(2^{n-1}| 2^{n-1}\right)\\
&=\bigcup_{k_1=0}^{k_1\leq \frac{n-1}2}        
      \prod_{k_2=0}^{k_1-1}t_{n-2(k_1-k_2)}\F\left(2^{n-1}| 2^{n-1}\right).    
\end{align*}
The Lemma is proved by induction.
\end{proof}
\begin{corollary}
$\big|\mathcal{O}\left(2^{2n}\right)\big|={2n\choose n}$, 
$\big|\mathcal{O}\left(2^{2n+1}\right)\big|={2n+1\choose n}$.
\end{corollary}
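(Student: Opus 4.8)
The plan is to read off the result from the orbit--stabilizer theorem applied to the $\mathbb{S}_n$-action on $N^n$. By definition $\mathcal{O}(2^n)$ is the orbit of the word $2^n$, and $\F(2^n|2^n)=\{\sigma\in\mathbb{S}_n\mid \sigma\cdot 2^n=2^n\}$ is precisely its stabilizer. Since the braid relations hold at the level of the permutation action on $N^n$, this is a genuine group action, so the orbit--stabilizer theorem gives $|\mathcal{O}(2^n)|\cdot|\F(2^n|2^n)|=|\mathbb{S}_n|=n!$. Everything therefore reduces to computing the order of the stabilizer $\F(2^n|2^n)$.

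For that I would invoke Lemma \ref{GroupF}, which identifies $\F(2^n|2^n)$ with the subgroup of $\mathbb{S}_n$ generated by $t_i=s_is_{i+1}s_i$ for $1\leq i\leq n-2$. Working in the standard Coxeter realization $s_i=(i,i+1)$, a one-line conjugation computation gives $t_i=s_is_{i+1}s_i=(i,i+2)$, the transposition exchanging $i$ and $i+2$. Because each such transposition links two positions of equal parity, the generators split into two commuting families: $(1,3),(3,5),\dots$ acting on the odd positions and $(2,4),(4,6),\dots$ acting on the even positions.

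The key step is then to observe that $(1,3),(3,5),\dots$ generate the full symmetric group on the $\lceil n/2\rceil$ odd positions, while $(2,4),(4,6),\dots$ generate the full symmetric group on the $\lfloor n/2\rfloor$ even positions; as these two symmetric groups have disjoint support, they commute and together generate their direct product. Hence $\F(2^n|2^n)\cong \mathbb{S}_{\lceil n/2\rceil}\times\mathbb{S}_{\lfloor n/2\rfloor}$, so $|\F(2^n|2^n)|=\lceil n/2\rceil!\,\lfloor n/2\rfloor!$. I expect this parity splitting, together with the reduction $t_i=(i,i+2)$, to be the only genuinely substantive point; everything afterward is immediate.

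Substituting into the orbit--stabilizer identity gives $|\mathcal{O}(2^n)|=n!/(\lceil n/2\rceil!\,\lfloor n/2\rfloor!)=\binom{n}{\lfloor n/2\rfloor}$. Specializing to $n=2n$ yields $\binom{2n}{n}$ and to $n=2n+1$ yields $\binom{2n+1}{n}$, as claimed. As a sanity check I would confirm consistency with the base cases recorded in the proof of Lemma \ref{GroupF}: for length $2$ the stabilizer is trivial, giving $2!/1=\binom{2}{1}$, and for length $3$ the stabilizer is $\{1,s_1s_2s_1\}$ of order $2$, giving $3!/2=\binom{3}{1}$.
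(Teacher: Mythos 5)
Your proof is correct and follows essentially the same route as the paper's: both identify the stabilizer $\mathfrak{F}(2^n|2^n)$ via Lemma \ref{GroupF}, split its generators $t_i=s_is_{i+1}s_i=(i,i+2)$ by parity into two commuting symmetric groups acting on the odd and even positions (the paper's $T_n$ and $T_n'$), and then conclude by the orbit--stabilizer theorem. Your write-up merely makes explicit two steps the paper leaves implicit, namely the identification $t_i=(i,i+2)$ and the orbit--stabilizer count.
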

\begin{proof}
Let $ 1\leq i\leq n-2$, $T_n=<t_i\mid i\equiv 1\mod 2>$ and 
$T_n^\prime=<t_i\mid i\equiv 0\mod 2>$, then 
$\F\left(2^{2n}|2^{2n}\right)=T_nT^\prime_n=T_n^\prime T_n\simeq \mathbb{S}_n\times \mathbb{S}_n$. So $\big|\F\left(2^{2n}|2^{2n}\right)\big|=(n!)^2$, which implies $\big|\mathcal{O}\left(2^{2n}\right)\big|={2n\choose n}$. Similarly, $\F\left(2^{2n+1}|2^{2n+1}\right)\simeq \mathbb{S}_{n+1}\times \mathbb{S}_n$, so $\big|\mathcal{O}\left(2^{2n+1}\right)\big|={2n+1\choose n}$.
\end{proof}

%%%%%%%%%%%%%%%%%%%%%%%%%%
%%%%%%%%%%%%%%%%%%%%%%%%%%
%%%%%%%%%%%%%%%%%%%%%%%%%%

\begin{proposition}\label{pascalTriangle}
\begin{align}
\big|\mathcal{O}\left(2^{2(n-k)}(12)^k\right)\big|
&={2n\choose n-k},\\
\big|\mathcal{O}\left(2^{2(n-k)+1}(12)^k\right)\big|
&={2n+1\choose n-k}.
\end{align}
\end{proposition}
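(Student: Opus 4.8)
\section*{Proof proposal}

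The plan is to convert orbit sizes into stabilizer sizes and then turn the group\mbox{-}theoretic decompositions of Lemma~\ref{shuffle} into a numerical recursion. Since $\mathbb{S}_{|x|}$ acts on $N^{|x|}$ with $|\mathcal{O}(x)|=|x|!/|\F(x|x)|$, it is enough to determine the stabilizer cardinalities $|\F(2^m(12)^k|2^m(12)^k)|$ and $|\F(2^m(21)^k|2^m(21)^k)|$ for all $m,k\ge 0$. It is convenient to prove the single closed form $|\mathcal{O}(2^m(12)^k)|=\binom{m+2k}{\lfloor m/2\rfloor}$, whose specializations at $m=2(n-k)$ and $m=2(n-k)+1$ give precisely the two asserted values $\binom{2n}{n-k}$ and $\binom{2n+1}{n-k}$; that these numbers obey Pascal's rule is what the title of the proposition refers to.

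The key observation is that for any $x,y$ the set $\F(x|y)$ is empty when $y\notin\mathcal{O}(x)$ and is otherwise a single left coset of $\F(x|x)$, so each block in Lemma~\ref{shuffle} — the insertion block $(\mathrm{id}^{\otimes m}\otimes\mathbb{S}_{2k-1,1})\F(\cdots)$ and each shifted block $(\prod_i s_i)\F(\cdots)$ — is a union of left cosets of the smaller stabilizer $\F(2^{m-1}(21)^k|2^{m-1}(21)^k)$ (resp.\ $\F(2^{m+1}(12)^{k-1}|\cdots)$). I would therefore read each displayed union as a union of such cosets and count how many distinct nonempty ones it contains: the insertion block supplies the $|\mathbb{S}_{2k-1,1}|=2k$ translates produced by threading the last strand through the final run, and the shifted blocks, indexed by $k_1$, supply the configurations drawn in Figures~\ref{2^m(12)^k} and~\ref{2^m(21)^k}. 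The outcome should be the two recursions
\begin{align*}
|\F(2^m(12)^k|2^m(12)^k)| &= \big(\lceil m/2\rceil+2k\big)\,|\F(2^{m-1}(21)^k|2^{m-1}(21)^k)|,\\
|\F(2^m(21)^k|2^m(21)^k)| &= \big(\lfloor m/2\rfloor+2k\big)\,|\F(2^{m+1}(12)^{k-1}|2^{m+1}(12)^{k-1})|.
\end{align*}

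Feeding the second recursion (with $m$ replaced by $m-1$) into the first eliminates the $(21)$-type and, using $\lfloor(m-1)/2\rfloor=\lceil m/2\rceil-1$, yields a recursion in $k$ alone,
\[
|\F(2^m(12)^k|\cdots)|=\big(\lceil m/2\rceil+2k\big)\big(\lceil m/2\rceil-1+2k\big)\,|\F(2^m(12)^{k-1}|\cdots)|.
\]
The base case $k=0$ is the corollary to Lemma~\ref{GroupF}, which identifies $\F(2^m|2^m)$ with $\mathbb{S}_{\lceil m/2\rceil}\times\mathbb{S}_{\lfloor m/2\rfloor}$ and so gives $|\F(2^m|2^m)|=\lfloor m/2\rfloor!\,\lceil m/2\rceil!$. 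A short telescoping — the factors $\lceil m/2\rceil+1,\dots,\lceil m/2\rceil+2k$ assemble into $(\lceil m/2\rceil+2k)!/\lceil m/2\rceil!$ — then gives $|\F(2^m(12)^k|\cdots)|=\lfloor m/2\rfloor!\,(\lceil m/2\rceil+2k)!$, whence $|\mathcal{O}(2^m(12)^k)|=\binom{m+2k}{\lfloor m/2\rfloor}$, completing the argument.

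The step I expect to be delicate is the coset count itself. One must verify that every target word listed in Lemma~\ref{shuffle} genuinely lies in the orbit of $2^{m-1}(21)^k$, so that its coset is nonempty, and that the blocks meet in the smallest possible way. The bookkeeping is not uniform across the two formulas: for the first identity the $\lfloor(m-1)/2\rfloor+1$ shifted blocks and the $2k$ insertion translates turn out to be pairwise disjoint, producing the clean coefficient $\lceil m/2\rceil+2k$, whereas for the second identity exactly one shifted block (the $k_1=0$ term) already appears among the insertion translates, so the honest number of distinct cosets is $\lfloor m/2\rfloor+2k$ rather than $\lfloor m/2\rfloor+2k+1$. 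Isolating this single coincidence — for instance by comparing the one-line forms of the representatives $\prod_i s_i$ with the $\mathbb{S}_{2k-1,1}$ staircase — is the main point that must be checked, and it is exactly what the two figures are there to make transparent.
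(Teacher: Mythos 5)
Your proposal is correct and is essentially the paper's own argument: both pass from orbit sizes to stabilizer sizes via orbit--stabilizer, read each block of Lemma~\ref{shuffle} as a union of left cosets of the smaller stabilizer (nonempty because the listed target words are in the orbit), and telescope the resulting multiplicative recursion down to the base case $\big|\mathfrak{F}\left(2^m|2^m\right)\big|=\lceil m/2\rceil!\,\lfloor m/2\rfloor!$ supplied by the corollary to Lemma~\ref{GroupF}. Your two unified recursions with coefficients $\lceil m/2\rceil+2k$ and $\lfloor m/2\rfloor+2k$ --- including the correctly identified one-coset overlap at $k_1=0$ in the second formula, which the paper absorbs without comment into its ``similarly, we can prove'' step --- reproduce exactly the paper's four parity-split identities with coefficient $m+2k$, and your closed form $\binom{m+2k}{\lfloor m/2\rfloor}$ specializes to both asserted binomial coefficients.
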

\begin{proof}
Firstly, we observe that  $\F\left(2^{m-1}(21)^k|2^{m-1}(21)^k\right)$ is a subgroup of $\mathbb{S}_{m+2k}$ and 
$\F\left(2^{m-1}(21)^k|2^{m-1-2k_1}1^{2k_1}(21)^k\right)$ is a coset of $\F\left(2^{m-1}(21)^k|2^{m-1}(21)^k\right)$. According to Lemma \ref{shuffle}, we have 
\[
\big|\F\left(2^{2m}(12)^k|2^{2m}(12)^k\right)\big|
=(m+2k)\big|\F\left(2^{2m-1}(21)^k|2^{2m-1}(21)^k\right)\big|.
\]
Similarly, we can prove that 
\begin{align*}
\big|\F\left(2^{2m-1}(12)^k|2^{2m-1}(12)^k\right)\big|
&=(m+2k)\big|\F\left(2^{2m-2}(21)^k|2^{2m-2}(21)^k\right)\big|, \\
\big|\F\left(2^{2m}(21)^k|2^{2m}(21)^k\right)\big|
&=(m+2k)\big|\F\left(2^{2m+1}(12)^{k-1}|2^{2m+1}(12)^{k-1}\right)\big|, \\
\big|\F\left(2^{2m+1}(21)^k|2^{2m+1}(21)^k\right)\big|
&=(m+2k)\big|\F\left(2^{2m+2}(12)^{k-1}|2^{2m+2}(12)^{k-1}\right)\big|.
\end{align*}
Now we can give a direct computation as follows 
\begin{align*}
&\quad\big|\F\left(2^{2(n-k)}(12)^k|2^{2(n-k)}(12)^k\right)\big|\\
&=(n+k)\big|\F\left(2^{2(n-k)-1}(21)^k|2^{2(n-k)-1}(21)^k\right)\big|\\
&=(n+k)(n+k-1)\big|\F\left(2^{2(n-k)}(12)^{k-1}|2^{2(n-k)}(12)^{k-1}\right)\big|\\
&=(n+k)(n+k-1)\cdots (n-k+1)\big|\F\left(2^{n-k}|2^{n-k}\right)\big|\\
&=(n+k)!(n-k)!\\
%%%%%%%%
&\quad\big|\F\left(2^{2(n-k)+1}(12)^k|2^{2(n-k)+1}(12)^k\right)\big|\\
&=(n-k+1+2k)\big|\F\left(2^{2(n-k)}(21)^k|2^{2(n-k)}(21)^k\right)\big|\\
&=(n+k+1)(n-k+2k)\big|\F\left(2^{2(n-k)+1}(12)^{k-1}|2^{2(n-k)+1}(12)^{k-1}\right)\big|\\
&=(n+k+1)(n+k)\cdots (n-k+2)\big|\F\left(2^{2(n-k)+1}|2^{2(n-k)+1}\right)\big|\\
&=(n+k+1)!(n-k)!
\end{align*}
\end{proof}
%%%%%%%%%%%%%%%%%%%%%%%%%%
%%%%%%%%%%%%%%%%%%%%%%%%%%
%%%%%%%%%%%%%%%%%%%%%%%%%%
%%%%%%%%%%%%%%%%%%%%%%%%%%
%%%%%%%%%%%%%%%%%%%%%%%%%%
%%%%%%%%%%%%%%%%%%%%%
%%%%%%%%%%%%%%%%%%%%%

\begin{theorem}\label{theorem_pascal}
\begin{align*}
N^{2n}&=\mathcal{O}\left(2^{2n}\right)\oplus\bigoplus_{k=1}^n
             \left[\mathcal{O}\left(2^{2(n-k)}(21)^{k}\right)
             \oplus \mathcal{O}\left(1^{2(n-k)}(12)^{k}\right)\right], \\
N^{2n+1}&=\bigoplus_{k=0}^{n}
             \left[\mathcal{O}\left(2^{2(n-k)+1}(12)^{k}\right)
             \oplus \mathcal{O}\left(1^{2(n-k)+1}(21)^{k}\right)\right].
\end{align*}
\end{theorem}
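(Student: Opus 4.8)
The plan is to read each identity as a \emph{partition} of $N^{2n}$ (resp.\ $N^{2n+1}$) into $\mathbb{S}$-orbits, obtained by combining one separating invariant with one cardinality count. Since the orbits of the $\mathbb{S}_m$-action always partition $N^m$, it suffices to establish two things: (a) the orbits listed on the right-hand side are pairwise distinct, and (b) the sum of their cardinalities equals $|N^m|=2^m$. Granting these, the listed orbits are pairwise disjoint (distinct orbits never meet) and their union $U\subseteq N^m$ satisfies $|U|=2^m=|N^m|$, so $U=N^m$, which is exactly the asserted decomposition.

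For step (a) I would introduce the function $\varepsilon\colon N^m\to\mathbb{Z}$ defined by $\varepsilon(i_1\cdots i_m)=\#\{j\text{ even}:i_j=2\}-\#\{j\text{ odd}:i_j=2\}$ and check that it is constant on orbits. It is enough to verify invariance under each generator $s_k$: the action of $s_k$ fixes a word unless $i_k=i_{k+1}$, in which case it flips both entries in the adjacent positions $k,k+1$, that is, it creates or destroys two $2$'s sitting at positions $k$ and $k+1$. As $k$ and $k+1$ have opposite parities, such a move alters the even-count and the odd-count of $2$'s by the same amount, so $\varepsilon$ is unchanged. A direct evaluation on the representatives then gives, in the even case, $\varepsilon(2^{2n})=0$, $\varepsilon(2^{2(n-k)}(21)^k)=-k$ and $\varepsilon(1^{2(n-k)}(12)^k)=+k$; these are the $2n+1$ pairwise distinct values $\{-n,\dots,n\}$. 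In the odd case one finds $\varepsilon(2^{2(n-k)+1}(12)^k)=-(k+1)$ and $\varepsilon(1^{2(n-k)+1}(21)^k)=k$, whose $2n+2$ values fill $\{-(n+1),\dots,n\}$ without repetition. Hence all listed orbits are distinct.

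For step (b) I would assemble the cardinalities from Proposition \ref{pascalTriangle} and its preceding corollary, using that complementation $x\mapsto\bar x$ is a bijection of $N^m$ commuting with the action, so $|\mathcal{O}(x)|=|\mathcal{O}(\bar x)|$ (this is precisely the set identity $\F(\bar x|\bar y)=\F(x|y)$ behind Lemma \ref{tildefxy}(3)). Thus $|\mathcal{O}(2^{2n})|=\binom{2n}{n}$, while $|\mathcal{O}(2^{2(n-k)}(21)^k)|=\binom{2n}{n-k}$ and $|\mathcal{O}(1^{2(n-k)}(12)^k)|=|\mathcal{O}(2^{2(n-k)}(21)^k)|$ by complementation. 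Summing the even case gives $\binom{2n}{n}+2\sum_{k=1}^n\binom{2n}{n-k}=\binom{2n}{n}+2\sum_{j=0}^{n-1}\binom{2n}{j}=2^{2n}$, using $\sum_{j=0}^{2n}\binom{2n}{j}=2^{2n}$ and $\binom{2n}{j}=\binom{2n}{2n-j}$. In the odd case $|\mathcal{O}(1^{2(n-k)+1}(21)^k)|=|\mathcal{O}(2^{2(n-k)+1}(12)^k)|=\binom{2n+1}{n-k}$ by Proposition \ref{pascalTriangle} and complementation, and $2\sum_{k=0}^n\binom{2n+1}{n-k}=2\sum_{j=0}^n\binom{2n+1}{j}=2^{2n+1}$. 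Together with (a) this yields both decompositions.

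The one genuinely delicate point is the orbit-size bookkeeping in step (b): Proposition \ref{pascalTriangle} is stated only for the blocks $2^{\bullet}(12)^k$, so the sizes of the $2^{\bullet}(21)^k$-orbits entering the even formula are not literally covered and must be recovered from the same inductive scheme—namely the second formula of Lemma \ref{shuffle} together with the parallel chain of recursions appearing in the proof of Proposition \ref{pascalTriangle}, which run $2^{2m}(21)^k$ down to $2^{\bullet}(12)^{k-1}$ and ultimately yield stabilizer order $(n+k)!\,(n-k)!$ and hence $\binom{2n}{n-k}$. Everything else—the invariance of $\varepsilon$, the evaluations on the representatives, and the two binomial summations—is routine.
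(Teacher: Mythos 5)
Your proof is correct, and its global skeleton — the $\mathbb{S}_m$-orbits partition $N^m$, so it suffices to show the listed orbits are pairwise distinct and that their cardinalities sum to $2^m$ — is the same structure the paper leaves implicit behind the phrase ``according to Proposition \ref{pascalTriangle}, we only need to check\dots''. Where you genuinely diverge is the distinctness step. The paper verifies distinctness only for the two same-$k$ pairs (orbits with different $k$ are already separated by their cardinalities $\binom{2n}{n-k}$), and does so by two ad hoc devices: for odd length, the parity of the number of $2$'s; for even length, prepending a $2$ and reducing to the odd case (where, note, the resulting pair is again separated by size, not parity). Your invariant $\varepsilon(i_1\cdots i_m)=\#\{j\ \text{even}:i_j=2\}-\#\{j\ \text{odd}:i_j=2\}$ is a cleaner, uniform substitute: it is visibly constant under the generators (a nontrivial move flips two adjacent equal letters, changing both counts by the same amount), and it takes pairwise distinct values on all $2n+1$ (resp.\ $2n+2$) representatives, so it separates every pair at once, independently of any size computation. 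What the paper's route buys is brevity given Proposition \ref{pascalTriangle}; what yours buys is a self-contained separation argument via a concrete $\mathbb{Z}$-valued invariant. One remark on the point you flag as delicate: it closes in one line, with no need to re-run the induction of Proposition \ref{pascalTriangle}. Complementation sends $2^{2(n-k)}(21)^k$ to $1^{2(n-k)}(12)^k$, and the latter lies in the same orbit as $2^{2(n-k)}(12)^k$, because $s_1s_3\cdots s_{2(n-k)-1}$ flips the prefix $1^{2(n-k)}$ pairwise into $2^{2(n-k)}$; the orbit $\mathcal{O}\left(2^{2(n-k)}(12)^k\right)$ is literally covered by Proposition \ref{pascalTriangle}, giving $\big|\mathcal{O}\left(2^{2(n-k)}(21)^k\right)\big|=\big|\mathcal{O}\left(1^{2(n-k)}(12)^k\right)\big|=\binom{2n}{n-k}$. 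This prefix-flipping identification is exactly the move the paper itself uses in its displayed identities such as $\mathcal{O}\left(22^{2(n-k)}(21)^{k}\right)=\mathcal{O}\left(1^{2(n-k)}2(21)^{k}\right)$.
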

\begin{remark}
The dimensions of the irreducible submodules of $N^n$ forms exactly the Pascal's triangle according to  the Proposition \ref{pascalTriangle} and the Theorem \ref{theorem_pascal}. This is not the first time that Pascal's triangle has been categorified  in a module category, please refer to \cite{MR2541502}, 
\cite{MR3069217}, and \cite{Im2020}. 
%\href{https://arxiv.org/abs/1906.07472}{arXiv:1906.07472}.
\end{remark}
%%%%%%%%%%%%%%%%%%%%%
\begin{proof}
According to Proposition \ref{pascalTriangle}, we only need to check that 
$\mathcal{O}\left(2^{2(n-k)}(21)^{k}\right)
             \neq  \mathcal{O}\left(1^{2(n-k)}(12)^{k}\right)$ and 
$\mathcal{O}\left(2^{2(n-k)+1}(12)^{k}\right)
             \neq \mathcal{O}\left(1^{2(n-k)+1}(21)^{k}\right)$. The latter is obvious since 
the action of $\mathbb{S}_{2n+1}$ on $N^{2n+1}$ always changes even $2$'s  and  $1$'s
into each other.  Suppose $\mathcal{O}\left(2^{2(n-k)}(21)^{k}\right)
             =  \mathcal{O}\left(1^{2(n-k)}(12)^{k}\right)$, 
then 
\[
\mathcal{O}\left(22^{2(n-k)}(21)^{k}\right)
             =  \mathcal{O}\left(21^{2(n-k)}(12)^{k}\right).
\] 
This is a contradiction, since 
\begin{align*}
\mathcal{O}\left(21^{2(n-k)}(12)^{k}\right)
          &=\mathcal{O}\left(2^{2(n-k)+1}(12)^{k}\right), \\
\mathcal{O}\left(22^{2(n-k)}(21)^{k}\right)
          &=  \mathcal{O}\left(1^{2(n-k)}2(21)^{k}\right)
             =  \mathcal{O}\left(1^{2(n-k+1)+1}(21)^{k-1}\right).
\end{align*} 
\end{proof}
%%%%%%%%%%%%%%%%%%%%%
%%%%%%%%%%%%%%%%%%%%%%%%%%
%%%%%%%%%%%%%%%%%%%%%%%%%%
%%%%%%%%%%%%%%%%%%%%%%%%%%
Denote $(n)_b=1+b+\cdots +b^{n-1}=\frac{b^n-1}{b-1}$, $(n)_b^!=\prod_{k=1}^n(k)_b$.
\begin{theorem}
Suppose $b^2\neq a=1$, then   
$\dim \mathfrak{B}\left(V_{1b1}\right)< \infty$ if and only if $b$ is a $n$-th primitive 
root of unity for $n\geq 2$.  In particular, $\dim \mathfrak{B}\left(V_{1b1}\right)=n^2$.
\end{theorem}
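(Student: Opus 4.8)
The plan is to read off $\dim\mathfrak{B}^m(V_{1b1})=\operatorname{rank}\mathfrak{S}_m$ degree by degree and to assemble the Hilbert series. By Lemma \ref{tildefxy}(1) we have $\mathfrak{S}_m(v_x)=\sum_{y\in\mathcal{O}(x)}\tildeF(x|y)\,v_y$, so in the basis $\{v_x\}_{x\in N^m}$ the matrix of $\mathfrak{S}_m$ is block diagonal, with one block $M_{\mathcal{O}}=\big(\tildeF(x|y)\big)_{x,y\in\mathcal{O}}$ for each orbit $\mathcal{O}\subseteq N^m$. Hence $\dim\mathfrak{B}^m=\sum_{\mathcal{O}}\operatorname{rank}M_{\mathcal{O}}$, and by Theorem \ref{theorem_pascal} there are exactly $m+1$ orbits in $N^m$. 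The whole problem is thereby reduced to understanding a single block at a time.

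The key lemma, which I expect to be the main obstacle, is that $\tildeF(x|y)$ is constant on each orbit: there is a scalar $\phi_{\mathcal{O}}\in\k$ with $\tildeF(x|y)=\phi_{\mathcal{O}}$ for all $x,y\in\mathcal{O}$, and moreover $\phi_{\mathcal{O}}$ is the $b$-deformation of the cardinalities of Proposition \ref{pascalTriangle}, namely $\phi_{\mathcal{O}}=(p+k)_b^!\,(p-k)_b^!$ for an orbit of type $\mathcal{O}\big(2^{2(p-k)}(12)^k\big)$ in degree $2p$, and $\phi_{\mathcal{O}}=(p+k+1)_b^!\,(p-k)_b^!$ for $\mathcal{O}\big(2^{2(p-k)+1}(12)^k\big)$ in degree $2p+1$. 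Granting this, $M_{\mathcal{O}}=\phi_{\mathcal{O}}J$ with $J$ the all-ones matrix, so $\operatorname{rank}M_{\mathcal{O}}=1$ when $\phi_{\mathcal{O}}\neq0$ and $0$ otherwise. I would prove the lemma by induction on $m$ through the recursion $\mathfrak{S}_m=\mathfrak{S}_{m-1,1}(\mathfrak{S}_{m-1}\otimes\mathrm{id})$: applying $\mathfrak{S}_{m-1}\otimes\mathrm{id}$ to $v_{x'j}$ (a word $x'\in N^{m-1}$ with a last letter $j$ appended) yields $\phi_{\mathcal{O}'}\sum_{y'\in\mathcal{O}'}v_{y'j}$ by the inductive hypothesis, and then Lemma \ref{shuffle} describes precisely how $\mathfrak{S}_{m-1,1}$ redistributes the last letter over the larger orbit. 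The scalars accumulated along the $\mathbb{S}_{2k-1,1}$-factor together with those from the displayed cosets sum to the $b$-integer by which the factorial grows; this simultaneously preserves the all-ones shape and multiplies $\phi_{\mathcal{O}'}$ by the correct factor $(\cdot)_b$. This scalar bookkeeping is the technical heart of the argument, and the symmetries recorded in Lemma \ref{tildefxy}(2)--(4) cut down the number of cases one must verify.

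With the lemma in hand the two implications are immediate. If $b$ is not a root of unity, then every $b$-factorial is nonzero, so $\phi_{\mathcal{O}}\neq0$ for every orbit; in particular the extremal orbit of the alternating word $2(12)^{\lfloor m/2\rfloor}$ has weight $(m)_b^!\neq0$, whence $\dim\mathfrak{B}^m\geq1$ for every $m$ and $\dim\mathfrak{B}(V_{1b1})=\infty$. This settles the ``only if'' direction in contrapositive form. Conversely, suppose $b$ is a primitive $n$-th root of unity; since $b^2\neq1$ we have $n\geq3$, and $(j)_b^!\neq0$ if and only if $j\leq n-1$. Thus $\phi_{\mathcal{O}}\neq0$ exactly when the larger of its two factorial indices is at most $n-1$, and counting via Theorem \ref{theorem_pascal} the orbits of each degree that satisfy this condition makes $\dim\mathfrak{B}^m$ equal to the coefficient of $t^m$ in $\big(\tfrac{1-t^n}{1-t}\big)^2=(n)_t^2$.

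Summing over $m$, the Hilbert series of $\mathfrak{B}(V_{1b1})$ is $(n)_t^2$, a polynomial of degree $2(n-1)$; evaluating at $t=1$ gives $\dim\mathfrak{B}(V_{1b1})=(n)_1^2=n^2$, completing the proof. The only genuinely difficult step is the constancy-and-factorial lemma of the second paragraph; once it is established, the remainder is orbit counting already prepared by Proposition \ref{pascalTriangle} and Theorem \ref{theorem_pascal}.
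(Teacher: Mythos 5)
Your proposal is correct and takes essentially the same approach as the paper: both decompose $\mathfrak{S}_m$ into one block per $\mathbb{S}_m$-orbit, show each block is a scalar of the form $(i)_b^!(j)_b^!$ times the all-ones matrix (hence of rank one or zero), and count the nonvanishing scalars as the entries of the $n\times n$ square at the top of Pascal's triangle, giving $n^2$. The only local difference is that the paper obtains the orbit-constancy of $\tilde{\mathfrak{F}}$ immediately from $\mathfrak{S}_2(v_1^2)=v_1^2+v_2^2=\mathfrak{S}_2(v_2^2)$ when $a=e=1$ (rather than by your induction through Lemma \ref{shuffle}), and it records the final count via the square in Figure \ref{coefficients} rather than the Hilbert series $\left(\frac{1-t^n}{1-t}\right)^2$; these are presentational, not substantive, differences.
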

%%%%%%%%%%%%%%%%%%
\begin{proof}
When $a=e=1$,  
$
\mathfrak{S}_2(v_1^2)=v_1^2+v_2^2=\mathfrak{S}_2(v_2^2)$,  so $\mathfrak{S}_n(v_x)=\mathfrak{S}_n(v_y)$ for any $y\in\mathcal{O}(x)\subset N^n$. That is to say, $\tildeF(x_1|y_1)=\tildeF(x_2|y_2)$  for $x_1, x_2, y_1, y_2\in \mathcal{O}(x_1)\subset N^n$ and $\mathfrak{S}_n(v_x)=\tildeF(x|x)\sum\limits_{y\in \mathcal{O}(x)}v_y$.
%%%%%%%%%%%%%%%%%%%%%%%%%%
\begin{align*}
&\quad\tildeF\left(2^{2(n-k)}(12)^k|2^{2(n-k)}(12)^k\right)\\
&=(n+k)_b\tildeF\left(2^{2(n-k)-1}(21)^k|2^{2(n-k)-1}(21)^k\right)\\
&=(n+k)_b(n+k-1)_b\tildeF\left(2^{2(n-k)}(12)^{k-1}|2^{2(n-k)}(12)^{k-1}\right)\\
&=(n+k)_b(n+k-1)_b\cdots (n-k+1)_b\tildeF\left(2^{n-k}|2^{n-k}\right)\\
&=(n+k)^!_b(n-k)^!_b\\
%%%%%%%%
&\quad\tildeF\left(2^{2(n-k)+1}(12)^k|2^{2(n-k)+1}(12)^k\right)\\
&=(n-k+1+2k)_b\tildeF\left(2^{2(n-k)}(21)^k|2^{2(n-k)}(21)^k\right)\\
&=(n+k+1)_b(n-k+2k)_b\tildeF\left(2^{2(n-k)+1}(12)^{k-1}|2^{2(n-k)+1}(12)^{k-1}\right)\\
&=(n+k+1)_b(n+k)_b\cdots (n-k+2)_b\tildeF\left(2^{2(n-k)+1}|2^{2(n-k)+1}\right)\\
&=(n+k+1)^!_b(n-k)^!_b
\end{align*}
%%%%%%%%%%%%%%%%%%%%%%%%%%
Now we can put a basis of $\mathfrak{B}\left(V_{1b1}\right)$ on the Pascal's triangle and the 
coefficients of the symmetrizer's action on the basis is given by the Figure \ref{coefficients}.
%%%%%%%%%%%%%%%%%%%%%%%%%%
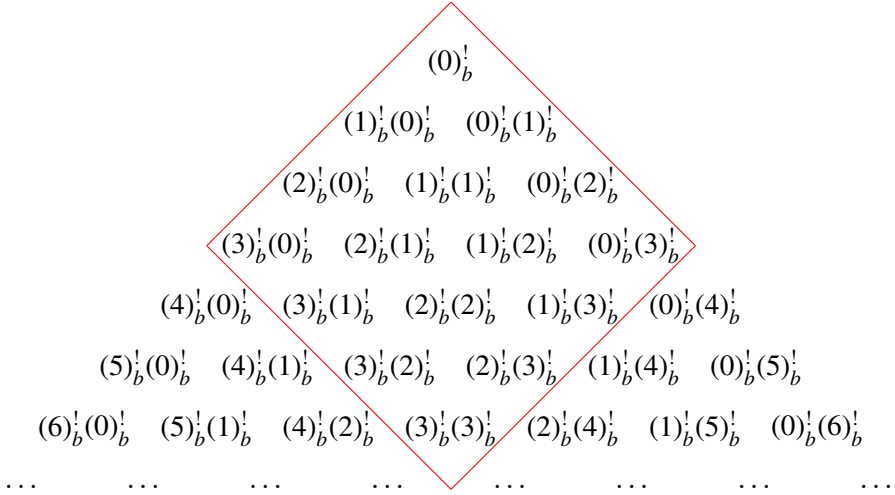
\begin{figure}[h!]
\begin{tikzpicture}
\node at (0, 0) {$(0)_b^!$};
\node at (-0.8, -0.8) {$(1)_b^!(0)_b^!$};
\node at (0.8, -0.8) {$(0)_b^!(1)_b^!$};
\node at (0, -1.6) {$(1)_b^!(1)_b^!$};
\node at (-1.6, -1.6) {$(2)_b^!(0)_b^!$};
\node at (1.6, -1.6) {$(0)_b^!(2)_b^!$};
\node at (-0.8, -2.4) {$(2)_b^!(1)_b^!$};
\node at (0.8, -2.4) {$(1)_b^!(2)_b^!$};
\node at (-2.4, -2.4) {$(3)_b^!(0)_b^!$};
\node at (2.4, -2.4) {$(0)_b^!(3)_b^!$};
\node at (0, -3.2) {$(2)_b^!(2)_b^!$};
\node at (-1.6, -3.2) {$(3)_b^!(1)_b^!$};
\node at (1.6, -3.2) {$(1)_b^!(3)_b^!$};
\node at (-3.2, -3.2) {$(4)_b^!(0)_b^!$};
\node at (3.2, -3.2) {$(0)_b^!(4)_b^!$};
\node at (-0.8, -4) {$(3)_b^!(2)_b^!$};
\node at (0.8, -4) {$(2)_b^!(3)_b^!$};
\node at (-2.4, -4) {$(4)_b^!(1)_b^!$};
\node at (2.4, -4) {$(1)_b^!(4)_b^!$};
\node at (-4, -4) {$(5)_b^!(0)_b^!$};
\node at (4, -4) {$(0)_b^!(5)_b^!$};
\node at (0, -4.8) {$(3)_b^!(3)_b^!$};
\node at (-1.6, -4.8) {$(4)_b^!(2)_b^!$};
\node at (1.6, -4.8) {$(2)_b^!(4)_b^!$};
\node at (-3.2, -4.8) {$(5)_b^!(1)_b^!$};
\node at (3.2, -4.8) {$(1)_b^!(5)_b^!$};
\node at (-4.8, -4.8) {$(6)_b^!(0)_b^!$};
\node at (4.8, -4.8) {$(0)_b^!(6)_b^!$};
\node at (-0.8, -5.6) {$\cdots$};
\node at (0.8, -5.6) {$\cdots$};
\node at (-2.4, -5.6) {$\cdots$};
\node at (2.4, -5.6) {$\cdots$};
\node at (-4, -5.6) {$\cdots$};
\node at (4, -5.6) {$\cdots$};
\node at (-5.6, -5.6) {$\cdots$};
\node at (5.6, -5.6) {$\cdots$};
\draw[color=red][-](0, 0.8)--(3.2, -2.4);
\draw[color=red][-](0, -5.6)--(3.2, -2.4);
\draw[color=red][-](0, -5.6)--(-3.2, -2.4);
\draw[color=red][-](0, 0.8)--(-3.2, -2.4);
\end{tikzpicture}
\caption{coefficients of the braided symmetrizer's action on the basis of $\mathfrak{B}\left(V_{1b1}\right)$}
\label{coefficients}
\end{figure}
When $b$ is a primitive $n$-th root of unity for $n\geq 2$, then all the nonzero coefficients located in a square 
on the top of the Pascal's triangle, see the case when $n=3$ in Figure \ref{coefficients}. So 
$\dim\mathfrak{B}\left(V_{1b1}\right)=n^2$.
\end{proof}
%%%%%%%%%%%%%%%%%%%%%%%%%%
%%%%%%%%%%%%%%%%%%%%%%%%%%
%%%%%%%%%%%%%%%%%%%%%%%%%%
%%%%%%%%%%%%%%%%%%%%%
%%%%%%%%%%%%%%%%%%%%%
%%%%%%%%%%%%%%%%%%%%%
\begin{theorem}
Suppose $b^2\neq ae$ and $b=-1$, then $\mathfrak{B}\left(V_{abe}\right)<\infty$ if
$ae$ is a $m$-th primitive root of unity for $m\geq 1$. In particular, 
$\dim \mathfrak{B}\left(V_{abe}\right)=4m$. 
\end{theorem}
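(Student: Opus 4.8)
The plan is to exploit the degree-two relations forced by $b=-1$ to collapse the entire computation onto a sequence of $2\times 2$ matrices. By the Remark it suffices to treat $V_{q,-1,1}$ with $q:=ae$; the hypothesis $b^2\neq ae$ together with $b=-1$ gives $q\neq 1$, so $q$ is a primitive $m$-th root of unity with $m\geq 2$. First I would compute the quadratic symmetrizer: since $c(v_1\otimes v_2)=c(v_2\otimes v_1)=-\,(\cdot)$, one gets $\mathfrak{S}_2(v_1\otimes v_2)=\mathfrak{S}_2(v_2\otimes v_1)=0$, whereas on $\mathrm{span}\{v_1^2,v_2^2\}$ the operator $\mathfrak{S}_2$ has matrix with determinant $1-ae\neq 0$. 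Hence $v_1v_2=v_2v_1=0$ in $\mathfrak{B}(V_{abe})$, and as $\ker\mathfrak{S}$ is an ideal of $T(V)$, every non-constant word $v_y$ lies in $\ker\mathfrak{S}$. Consequently $\mathfrak{B}^n(V_{abe})$ is spanned by the classes of $v_{1^n}$ and $v_{2^n}$ for $n\geq 1$, so $\dim\mathfrak{B}^n\leq 2$, and in fact $\dim\mathfrak{B}^n=\dim\mathrm{span}\{\mathfrak{S}_n(v_{1^n}),\mathfrak{S}_n(v_{2^n})\}$.

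The key technical step is a cancellation lemma: for $b=-1$, both $\mathfrak{S}_n(v_{1^n})$ and $\mathfrak{S}_n(v_{2^n})$ lie in $\mathrm{span}\{v_{1^n},v_{2^n}\}$. I would prove this by induction on $n$ using $\mathfrak{S}_n=\mathfrak{S}_{n-1,1}(\mathfrak{S}_{n-1}\otimes\mathrm{id})$. By induction $\mathfrak{S}_{n-1}(v_{2^{n-1}})$ is a combination of $v_{1^{n-1}},v_{2^{n-1}}$, so it remains to evaluate $\mathfrak{S}_{n-1,1}=\mathrm{id}+c_{n-1}+c_{n-2}c_{n-1}+\cdots+c_1\cdots c_{n-1}$ on the words $v_{2^n}$ and $v_{1^{n-1}2}$. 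Writing the successive terms as $u_n,u_{n-1},\dots,u_1$ with $u_j=c_j u_{j+1}$, each $u_j$ is obtained from $u_{j+1}$ either by flipping an equal adjacent pair $v_1v_1\leftrightarrow v_2v_2$ (producing a new word) or by a braiding on a mixed adjacent pair $v_1v_2$ or $v_2v_1$; since $b=-1$ the latter multiplies by $-1$ without changing the word, so that term cancels its predecessor. The terms thus cancel in consecutive pairs and only constant words survive. This is exactly where $b=-1$ is essential, and it is the main obstacle: without it one would have to control the whole orbit matrix of $\tilde{\mathfrak{F}}$ rather than a $2\times 2$ block.

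Granting the lemma, write $\mathfrak{S}_n(v_{2^n})=A_n v_{2^n}+B_n v_{1^n}$ and $\mathfrak{S}_n(v_{1^n})=C_n v_{2^n}+D_n v_{1^n}$, and let $M_n$ be the resulting $2\times 2$ matrix; then $\dim\mathfrak{B}^n=\mathrm{rank}\,M_n$. Tracking the surviving flips in the telescoping sum gives, for $n$ even, $A_n=A_{n-1}$ and $B_n=(-1)^{(n-2)/2}e^{n/2}A_{n-1}$ (and symmetrically $D_n=D_{n-1}$, $C_n=(-1)^{(n-2)/2}a^{n/2}D_{n-1}$), and for $n$ odd, $A_n=A_{n-1}+(-a)^{(n-1)/2}B_{n-1}$, $D_n=D_{n-1}+(-e)^{(n-1)/2}C_{n-1}$, with $B_n=C_n=0$ because $v_{1^n}\notin\mathcal{O}(v_{2^n})$ in odd length. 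Solving with $A_1=D_1=1$ gives $A_{2k-1}=A_{2k}=D_{2k-1}=D_{2k}=\prod_{j=1}^{k-1}(1-q^j)$, and for even degree $\det M_{2k}=\bigl(\prod_{j=1}^{k-1}(1-q^j)\bigr)^2(1-q^k)$, since the off-diagonal product contributes $(ae)^k=q^k$.

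Finally I would read off the ranks using that $q$ is a primitive $m$-th root of unity. For odd $n=2k-1$ the matrix is diagonal with equal entries $\prod_{j=1}^{k-1}(1-q^j)$, so $\dim\mathfrak{B}^{2k-1}=2$ when $k\leq m$ and $0$ otherwise. For even $n=2k$ the determinant is nonzero exactly for $1\leq k\leq m-1$, giving $\dim\mathfrak{B}^{2k}=2$ there; at $k=m$ the determinant vanishes while the diagonal entries $\prod_{j=1}^{m-1}(1-q^j)$ do not, so $\mathrm{rank}\,M_{2m}=1$; and for $k>m$ the matrix is zero. Hence the nonzero homogeneous dimensions are $1$ in degree $0$, $2$ in each degree $1,\dots,2m-1$, and $1$ in degree $2m$, whence $\dim\mathfrak{B}(V_{abe})=1+2(2m-1)+1=4m$, which in particular establishes finiteness.
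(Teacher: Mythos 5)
Your proposal is correct and follows essentially the same route as the paper's own proof: both use $b=-1$ to kill every word containing a mixed adjacent pair, both exploit the factorization $\mathfrak{S}_n=\mathfrak{S}_{n-1,1}(\mathfrak{S}_{n-1}\otimes\mathrm{id})$ to get the same recursions for the coefficients of $v_{1^n}$ and $v_{2^n}$ (your $\prod_{j=1}^{k-1}(1-q^j)$ is exactly the paper's $(1-ae)^{k-1}(k-1)^!_{ae}$), and both finish with the same rank drop to $1$ in degree $2m$ and the count $1+2(2m-1)+1=4m$. The only presentational difference is that you justify $\mathfrak{S}_n(v_{i^n})\in\mathrm{span}\{v_{1^n},v_{2^n}\}$ by an explicit pairwise-cancellation argument in $\mathfrak{S}_{n-1,1}$, where the paper instead invokes its symmetry lemma for $\tilde{\mathfrak{F}}(x|y)$.
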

\begin{proof}
Since $b=-1$, $v_1v_2=v_2v_1=0$. Let $x=i_1\cdots i_n\in N^n$ such that 
$i_ki_{k+1}=12$ or $21$ for some $k=0,\cdots, n-1$, then 
\[
0=\mathfrak{S}_n(v_x)=\sum_{y\in\mathcal{O}(x)}\tildeF(x,y)v_y
\Rightarrow \tildeF(x,y)=0. 
\]
According to Lemma \ref{tildefxy}, we have $\tildeF(y, x)=0$ for any $y\in \mathcal{O}(x)$.
So 
\begin{align*}
\mathfrak{S}_n(v_1^n)=\left\{\begin{array}{ll}
\tildeF(1^n, 1^n)v_1^n+\tildeF(1^n, 2^n)v_2^n, & n\,\,\, \text{is even}, \\
\tildeF(1^n, 1^n)v_1^n, & n\,\,\, \text{is odd}. 
\end{array}\right.
\end{align*}
Since $\mathfrak{S}_n= \mathfrak{S}_{n-1,1}(\mathfrak{S}_{n-1}\otimes \mathrm{id})$, 
we have 
\begin{align*}
\tildeF\left(1^{2m},2^{2m}\right)
&=a(ab)^{m-1}\tildeF\left(1^{2m-1},1^{2m-1}\right)\\
%=a(-a)^{m-1}\tildeF\left(1^{2m-1},1^{2m-1}\right)\\
%%%%%%%%%
\tildeF\left(1^{2m},1^{2m}\right)
&=\tildeF\left(1^{2m-1},1^{2m-1}\right)\\
&=\tildeF\left(1^{2m-2},1^{2m-2}\right)
     +(eb)^{m-1}\tildeF\left(1^{2m-2},2^{2m-2}\right)\\
&=\tildeF\left(1^{2m-2},1^{2m-2}\right)
     +(eb)^{m-1}a(ab)^{m-2}\tildeF\left(1^{2m-3},1^{2m-3}\right)\\
%&=\tildeF\left(1^{2m-2},1^{2m-2}\right)
%     +(ae)^{m-1}b^{2m-3}\tildeF\left(1^{2m-3},1^{2m-3}\right)\\
&=\left[1+(ae)^{m-1}b^{2m-3}\right]\tildeF\left(1^{2m-3},1^{2m-3}\right)\\
&=\left[1-(ae)^{m-1}\right]\tildeF\left(1^{2m-3},1^{2m-3}\right)\\
%%%%%%%%%
\tildeF\left(1^{2m+1},1^{2m+1}\right)
&=\left[1-(ae)^m\right]\left[1-(ae)^{m-1}\right]\cdots 
\left[1-(ae)^2\right]\tildeF\left(1^{3},1^{3}\right)\\
&=\left[1-(ae)^m\right]\left[1-(ae)^{m-1}\right]\cdots 
\left[1-(ae)^2\right]\left(1-ae\right)\\
&=(1-ae)^m(m)_{ae}^!
\end{align*}
Similarly, we have 
 \begin{align*}
 \tildeF\left(2^{2m},2^{2m}\right)
&=\tildeF\left(2^{2m-1},2^{2m-1}\right)
=(1-ae)^{m-1}(m-1)_{ae}^!\\
 \tildeF\left(2^{2m},1^{2m}\right)
&=e(eb)^{m-1}\tildeF\left(2^{2m-1},2^{2m-1}\right)
=e^mb^{m-1}(1-ae)^{m-1}(m-1)_{ae}^!
 \end{align*}
 Since  $ae$ is a $m$-th primitive root,  $v_1^{2m+1}=0$, $v_2^{2m+1}=0$, and 
 \begin{align*}
 \mathfrak{S}_{2m}(v_1^{2m})
 &=(1-ae)^{m-1}(m-1)_{ae}^!\left[v_1^{2m}+a^mb^{m-1}v_2^{2m}\right]\\
  \mathfrak{S}_{2m}(v_2^{2m})
 &=(1-ae)^{m-1}(m-1)_{ae}^!\left[v_2^{2m}+e^mb^{m-1}v_1^{2m}\right]\\
 &=e^mb^{m-1} \mathfrak{S}_{2m}(v_1^{2m}).
 \end{align*}
 So $\dim \mathfrak{B}\left(V_{abe}\right)=4m$. 
\end{proof}
%%%%%%%%%%%%%%%%%%%%%
%%%%%%%%%%%%%%%%%%%%%
%%%%%%%%%%%%%%%%%%%%%

\section{The polynomial $\tildeF(1^n,1^n)$  and a class of  combinatorial numbers}
Since $V_{abe}$ is isomorphic to 
$V_{ae\,b\,1}$ as braided vector spaces and $\tildeF(1^n,1^n)$ is a polynomial of  $b$ and $ae$, we set $e=1$ for convenience.  Let $x, y\in N^n$, denote 
\[
\tildeF(x,y)=\sum\limits_{k=0}\tildeF_k(x, y)b^k,
\]
where $\tildeF_k(x, y)$ is a  polynomial of $a$.
Then 
\[
\tildeF(1^n,1^n)=\sum\limits_{k=0}^{n(n-1)/2}\tildeF_k(1^n,1^n)b^k.
\]
%where $\tildeF_k(1^n,1^n)$ is a polynomial of $a$.

Let $w\in \F\left(1^n|1^n\right)$, define $\mathrm{tl}(w)$ as the minimal length of $w$ in expressions of $t_i$'s and $\mathrm{sl}(w)$ as the minimal length of $w$ in expressions of $s_i$'s. Denote
\[
\mathcal{E}_{k,s}^n=\#\left\{\sigma\in\F\left(1^n,1^n\right)\Big| 
\mathrm{tl}(\sigma)=k, \mathrm{sl}(\sigma)=s\right\}.
\]
\begin{proposition}
$\tildeF_k(1^n,1^n)=\sum_{\mathrm{tl}(w)=k}\#\{\mathrm{sl}(w)\}a^{k-[3k-\mathrm{sl}(w)]/2}
=\sum_{i=0}\mathcal{E}_{k,2i+k}^na^i$. 
\end{proposition}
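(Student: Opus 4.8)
The plan is to evaluate $\tildeF(1^n,1^n)$ as a sum over the stabilizer group $\F(1^n,1^n)$ and then read off the coefficient of $b^k$. First I would note that, by the definition of $\tildeF$ together with the fact that each $c_i$ sends a monomial $v_z$ to a scalar multiple of $v_{s_i\cdot z}$, one has $\Phi_\sigma(v_{1^n})=\lambda(\sigma)\,v_{1^n}$ for every $\sigma\in\F(1^n,1^n)$ with $\lambda(\sigma)\in\k$, so that $\tildeF(1^n,1^n)=\sum_{\sigma\in\F(1^n,1^n)}\lambda(\sigma)$. Since $c$ is a braiding, $\Phi_\sigma$ is well defined on reduced words, so $\lambda(\sigma)$ depends only on $\sigma$. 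Fixing a reduced $s$-expression of length $\mathrm{sl}(\sigma)$ and applying $\Phi_\sigma=c_{i_1}\cdots c_{i_{\mathrm{sl}(\sigma)}}$ to $v_{1^n}$ one letter at a time, each step contributes a factor $a$ on a window $11$, a factor $b$ on a window $12$ or $21$, and a factor $1$ on a window $22$ (recall $e=1$). Hence $\lambda(\sigma)=a^{p}b^{r}$, where $p$ counts the steps $11\mapsto 22$ and $r$ the mixed steps. Because the word begins and ends at $1^n$, the steps $11\mapsto 22$ and $22\mapsto 11$ occur equally often, so $\mathrm{sl}(\sigma)=2p+r$. Writing $\alpha(\sigma)=p$ and $\beta(\sigma)=r$, this gives $\alpha(\sigma)=(\mathrm{sl}(\sigma)-\beta(\sigma))/2$.

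The crux is to identify the $b$-exponent, namely to prove $\beta(\sigma)=\mathrm{tl}(\sigma)$. The key device is the bijection $\psi\colon N^n\to N^n$ that flips the entries in the even positions. A direct check on generators shows that $\psi$ intertwines the given action of $\mathbb{S}_n$ with the ordinary action $\ast$ permuting the letters of a word, i.e. $\sigma\cdot x=\psi^{-1}(\sigma\ast\psi(x))$, and that under $\psi$ a window $12$ or $21$ of $x$ becomes a window $11$ or $22$ of $\psi(x)$ and conversely. Since $\psi(1^n)$ is the alternating word $1212\cdots$, the subgroup $\F(1^n,1^n)$ is carried to the ordinary stabilizer of $1212\cdots$, which is the parity Young subgroup $\mathbb{S}_{\mathrm{odd}}\times\mathbb{S}_{\mathrm{even}}$; moreover $t_i=s_is_{i+1}s_i$ becomes the ordinary transposition $(i,i+2)$, so the $t_i$ are exactly the simple reflections of this Coxeter group and $\mathrm{tl}(\sigma)$ is its Coxeter length, equal to the number of inversions of $\sigma$ among odd positions plus those among even positions. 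Under $\psi$ the mixed steps (the $b$-moves) correspond precisely to the crossings of two wires of the same colour in a reduced wiring diagram of $\sigma$, where each wire is coloured by the parity of its starting position. In a reduced diagram each pair of wires crosses at most once and crosses exactly once iff it is an inversion, so the number of same-colour crossings equals the number of same-parity inversions, which is $\mathrm{tl}(\sigma)$. Therefore $\beta(\sigma)=\mathrm{tl}(\sigma)$, and consequently $\alpha(\sigma)=(\mathrm{sl}(\sigma)-\mathrm{tl}(\sigma))/2$.

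Finally I would assemble the pieces. Substituting $\lambda(\sigma)=a^{(\mathrm{sl}(\sigma)-\mathrm{tl}(\sigma))/2}b^{\mathrm{tl}(\sigma)}$ into $\tildeF(1^n,1^n)=\sum_\sigma\lambda(\sigma)$ and extracting the coefficient of $b^k$ gives $\tildeF_k(1^n,1^n)=\sum_{\mathrm{tl}(\sigma)=k}a^{(\mathrm{sl}(\sigma)-k)/2}$, which is the middle expression once one simplifies $k-[3k-\mathrm{sl}(\sigma)]/2=(\mathrm{sl}(\sigma)-k)/2$; grouping further by the value $\mathrm{sl}(\sigma)=2i+k$ and using the definition of $\mathcal{E}^n_{k,s}$ rewrites it as $\sum_{i}\mathcal{E}^n_{k,2i+k}a^i$, the claimed identity. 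The main obstacle is the middle step: proving $\beta(\sigma)=\mathrm{tl}(\sigma)$ genuinely requires the passage to the ordinary action through $\psi$, which turns both $\mathrm{sl}$ and $\mathrm{tl}$ into honest inversion counts, after which the same-colour wiring-diagram count makes the equality transparent. Everything else is the well-definedness of $\lambda$ and routine bookkeeping.
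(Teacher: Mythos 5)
Your proof is correct, and it takes a genuinely different route at the decisive point. You and the paper share the same frame: both sum contributions $a^{p}b^{r}$ over the stabilizer $\F(1^n,1^n)$, identified via Lemma \ref{GroupF} with the parity subgroup $\langle t_i\rangle\simeq \mathbb{S}_m\times\mathbb{S}_m$ acting on odd and even positions, and both use the conservation argument (moves $11\mapsto 22$ and $22\mapsto 11$ occur equally often) to get $\mathrm{sl}(w)=2p+r$, hence the exponent $k-[3k-\mathrm{sl}(w)]/2=(\mathrm{sl}(w)-k)/2$. The difference is how the $b$-exponent gets pinned to $\mathrm{tl}(w)$: the paper specializes $a=e=1$ and matches the resulting sum against the previously computed $\tildeF(1^{2m},1^{2m})=(m)_b^!(m)_b^!$, the Poincar\'e polynomial of $\mathbb{S}_m\times\mathbb{S}_m$ in the generators $t_i$ --- an identity of generating functions from which the term-by-term form of each contribution is then asserted. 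You instead prove the pointwise equality $r=\mathrm{tl}(w)$ directly: your parity-flip bijection $\psi$ intertwines the twisted action with the ordinary permutation action, carrying $\F(1^n,1^n)$ to the ordinary stabilizer of $1212\cdots$ and the $b$-moves to same-colour crossings; in a reduced wiring diagram each pair of wires crosses at most once and exactly the inversion pairs cross, so same-colour crossings equal same-parity inversions, which is precisely the Coxeter length of $w$ in $\langle t_i\rangle$, i.e.\ $\mathrm{tl}(w)$. What your route buys is rigor exactly where the paper is terse, since equality of generating functions does not by itself yield equality exponent by exponent, whereas your crossing count does; what the paper's route buys is brevity, as it recycles a computation already carried out for the $n^2$-dimensional theorem.
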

\begin{proof}
According to the Lemma \ref{GroupF}, $\F(1^n,1^n)=<t_i=s_is_{i+1}s_i\mid i=1,\cdots, n-2>$.
In case $n=2m$ is even, then 
$\F(1^{2m},1^{2m})\simeq \mathbb{S}_m\times \mathbb{S}_m$, where the first 
$\mathbb{S}_m$ is the symmetric group on  even numbers and the second 
$\mathbb{S}_m$ is the symmetric group on odd numbers. 
If $a=e=1$, then $\tildeF(1^{2m},1^{2m})=(m)_b^!(m)_b^!$. So for any $w\in \F(1^{2m},1^{2m})$, 
its contribution to $\tildeF(1^{2m},1^{2m})$ is 
$a^{k-[3k-\mathrm{sl}(w)]/2} b^{\mathrm{tl}(w)}$. The similar result holds on case that $n$ is odd. 
\end{proof}

\begin{lemma}
$\tildeF_0(1^n,1^n)=1$, $\tildeF_1(1^n,1^n)=(n-2)a$ for $n\geq 2$.  
\end{lemma}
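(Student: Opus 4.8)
The plan is to read off both identities directly from the preceding Proposition, which expresses $\tildeF_k(1^n,1^n)=\sum_{i\ge 0}\mathcal{E}_{k,2i+k}^n a^i$ in terms of the counting numbers $\mathcal{E}_{k,s}^n$. Thus everything reduces to enumerating, for $k=0$ and $k=1$, the elements $w\in\F(1^n,1^n)$ with $\mathrm{tl}(w)=k$ together with their $s$-lengths $\mathrm{sl}(w)$. By Lemma \ref{tildefxy}(3) we have $\F(1^n,1^n)=\F(2^n,2^n)$, so by Lemma \ref{GroupF} this group is generated by the elements $t_i=s_is_{i+1}s_i$, $1\le i\le n-2$, and I will use this presentation throughout.

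For $k=0$ I would observe that $\mathrm{tl}(w)=0$ forces $w=\mathrm{id}$, the empty product of $t_i$'s, and $\mathrm{sl}(\mathrm{id})=0$. Hence $\mathcal{E}_{0,0}^n=1$ while $\mathcal{E}_{0,2i}^n=0$ for $i>0$, and the Proposition gives $\tildeF_0(1^n,1^n)=\mathcal{E}_{0,0}^n a^0=1$.

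For $k=1$ the elements of $t$-length one are exactly the generators $t_1,\dots,t_{n-2}$, and I would first check these are $n-2$ distinct group elements. Viewing each $t_i=s_is_{i+1}s_i$ inside $\mathbb{S}_n$, the braid relation shows $t_i$ is the transposition $(i,i+2)$; these have pairwise different supports $\{i,i+2\}$, so they are distinct, and being non-identity and not simple reflections they have $t$-length exactly $1$. I would then compute $\mathrm{sl}(t_i)$: since $\mathrm{sl}$ is the Coxeter length in $(\mathbb{S}_n,\{s_1,\dots,s_{n-1}\})$, it equals the inversion number of $(i,i+2)$, namely $3$; equivalently $s_is_{i+1}s_i$ is already reduced. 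Therefore $\mathcal{E}_{1,3}^n=n-2$ and $\mathcal{E}_{1,2i+1}^n=0$ otherwise, so the Proposition yields $\tildeF_1(1^n,1^n)=\mathcal{E}_{1,3}^n a^{(3-1)/2}=(n-2)a$. For $n=2$ the index range $1\le i\le n-2$ is empty, giving $\tildeF_1=0=(n-2)a$, consistent with the claim.

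The only genuinely non-formal point is the evaluation $\mathrm{sl}(t_i)=3$, i.e.\ that $s_is_{i+1}s_i$ admits no shorter expression in the simple reflections. This is where I expect the main (though small) obstacle to lie, and I would settle it by the standard fact that in a symmetric group the $s$-length of a permutation coincides with its inversion number; the transposition $(i,i+2)$ inverts exactly the three pairs $(i,i+1)$, $(i,i+2)$, $(i+1,i+2)$, forcing $\mathrm{sl}(t_i)=3$. Everything else is a direct substitution into the Proposition.
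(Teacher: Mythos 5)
Your proof is correct. Note that the paper states this lemma without any proof, so there is nothing to match verbatim; but the surrounding structure shows that the intended logical order is the reverse of yours: the paper first obtains the polynomial identities (most naturally by the same recursion $\mathfrak{S}_n=\mathfrak{S}_{n-1,1}(\mathfrak{S}_{n-1}\otimes \mathrm{id})$ that drives all the later computations of $\tildeF_2,\dots,\tildeF_5$, which in degree $\leq 1$ reduces to $\tildeF_0(1^n,1^n)=\tildeF_0(1^{n-1},1^{n-1})$ and $\tildeF_1(1^n,1^n)=\tildeF_1(1^{n-1},1^{n-1})+a$), and only afterwards deduces the counting numbers $\mathcal{E}_{1,s}^n$ in the corollary that follows. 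You run the logic the other way: you enumerate the elements of $t$-length $0$ and $1$ in $\F(1^n,1^n)=\langle t_1,\dots,t_{n-2}\rangle$, identify $t_i$ with the transposition $(i,i+2)$ of Coxeter length $3$, and then read $\tildeF_0$ and $\tildeF_1$ off the Proposition. This is legitimate and not circular, since Lemma \ref{GroupF}, the identification $\F(1^n|1^n)=\F(2^n|2^n)$, and the Proposition are all established independently of this lemma. What your route buys is that the subsequent corollary on $\mathcal{E}_{1,s}^n$ becomes automatic rather than a separate deduction; what it costs is total reliance on the Proposition's exponent formula $a^{k-[3k-\mathrm{sl}(w)]/2}$, whose justification in the paper is itself only sketched, whereas the recursion route computes the coefficients directly from the braiding. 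Your one nontrivial verification, $\mathrm{sl}(t_i)=\mathrm{inv}\left((i,i+2)\right)=3$ together with the distinctness of the $t_i$, is exactly right, and you correctly handle the degenerate case $n=2$, where the generating set is empty.
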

\begin{corollary}
$
\mathcal{E}_{1,s}^n=\left\{\begin{array}{ll}
n-2, &s=3, n\geq 2,\\
0, &otherwise.
\end{array}\right.
$
\end{corollary}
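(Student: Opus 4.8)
The statement is a direct corollary of the two preceding results, and the plan is to read off the numbers $\mathcal{E}^n_{1,s}$ by matching coefficients. Specializing the Proposition to $k=1$ gives
\[
\tildeF_1(1^n,1^n)=\sum_{i\ge 0}\mathcal{E}^n_{1,2i+1}a^i,
\]
while the preceding Lemma asserts $\tildeF_1(1^n,1^n)=(n-2)a$. Comparing the two as polynomials in $a$ yields $\mathcal{E}^n_{1,1}=0$ (coefficient of $a^0$), $\mathcal{E}^n_{1,3}=n-2$ (coefficient of $a^1$), and $\mathcal{E}^n_{1,2i+1}=0$ for all $i\ge 2$. This already determines $\mathcal{E}^n_{1,s}$ for every odd $s$.

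It remains to dispose of the even values of $s$, which do not occur in the Proposition's expansion. For this I would invoke a parity argument: each generator $t_i=s_is_{i+1}s_i$ is a product of three simple transpositions, hence an odd permutation, so $\mathrm{sign}(w)=(-1)^{\mathrm{tl}(w)}=(-1)^{\mathrm{sl}(w)}$ for every $w\in\F(1^n,1^n)$, forcing $\mathrm{sl}(w)\equiv\mathrm{tl}(w)\pmod 2$. In particular $\mathrm{tl}(w)=1$ implies $\mathrm{sl}(w)$ odd, so $\mathcal{E}^n_{1,s}=0$ whenever $s$ is even. Combining the two cases gives exactly the claimed formula.

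An equivalent and arguably cleaner route computes $\mathcal{E}^n_{1,s}$ straight from its definition. By Lemma \ref{GroupF} the group $\F(1^n,1^n)$ is generated by $t_1,\dots,t_{n-2}$, so the elements with $\mathrm{tl}(w)=1$ are precisely these generators. Using the braid relation, $t_i=s_is_{i+1}s_i$ equals the transposition $(i,i+2)$; the inversion count $\ell\big((i,j)\big)=2(j-i)-1$ then gives $\mathrm{sl}(t_i)=3$ for each $i$, and the $t_i$ are pairwise distinct since $(i,i+2)=(j,j+2)$ forces $i=j$. Hence there are exactly $n-2$ elements of $t$-length one, all of $s$-length three, which is the assertion.

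The only genuinely delicate point, in either approach, is the evaluation $\mathrm{sl}(t_i)=3$: one must verify that the transposition $(i,i+2)$ really cannot be shortened below three simple reflections. I expect this to be the main (and still minor) obstacle; it follows either from the length formula $2(j-i)-1$ specialized to $j=i+2$, or from the observation that $t_i\ne 1$ and $t_i\notin\{s_1,\dots,s_{n-1}\}$, together with the parity constraint $\mathrm{sl}(t_i)\equiv 1\pmod 2$, leave $\mathrm{sl}(t_i)=3$ as the only possibility.
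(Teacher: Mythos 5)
Your proposal is correct. Your first route is exactly the paper's own (implicit) argument: the paper states this corollary with no proof at all, as an immediate consequence of matching coefficients of the Lemma's value $\tildeF_1(1^n,1^n)=(n-2)a$ against the Proposition's expansion $\tildeF_1(1^n,1^n)=\sum_{i\geq 0}\mathcal{E}^n_{1,2i+1}a^i$. The one point you handle more carefully than the paper is the vanishing for even $s$: the paper silently absorbs this into the Proposition's notation (only exponents $s=2i+k$ appear there), and your parity argument --- each $t_i=s_is_{i+1}s_i$ is an odd permutation, so $\mathrm{sl}(w)\equiv\mathrm{tl}(w)\pmod 2$ for every $w\in\F(1^n,1^n)$ --- is precisely the fact that makes that notation legitimate; making it explicit is a genuine improvement in rigor. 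Your second route is genuinely different from the paper: instead of quoting the Lemma, you enumerate the $\mathrm{tl}=1$ elements directly as the $n-2$ pairwise distinct transpositions $t_i=(i,i+2)$ and compute $\mathrm{sl}(t_i)=2(j-i)-1\big|_{j=i+2}=3$ from the inversion count. This is more elementary and self-contained: combined with the Proposition it independently reproves the Lemma's value $\tildeF_1(1^n,1^n)=(n-2)a$ rather than relying on it, at the modest cost of the Coxeter-length verification you correctly flag as the only delicate step (and which your length formula, or the parity-plus-exclusion argument, settles). Either route is complete; nothing is missing.
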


\begin{lemma}
$\tildeF_{n(n-1)}(1^{2n},1^{2n})=a^{n(n-1)/2}$ and $\tildeF_{n^2}(1^{2n+1},1^{2n+1})=a^{n(n+1)/2}$.
\end{lemma}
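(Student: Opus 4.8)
The plan is to read off the top $b$-degree coefficient of $\tildeF(1^N,1^N)$ (with $N=2n$ or $N=2n+1$) directly from the Coxeter-theoretic structure of the group $\F(1^N,1^N)$. By Lemma \ref{tildefxy}(3) one has $\F(1^N,1^N)=\F(2^N,2^N)$, so Lemma \ref{GroupF} and the Corollary following it identify this group as the subgroup of $\mathbb{S}_N$ generated by the transpositions $t_i=s_is_{i+1}s_i=(i\ i{+}2)$, namely $\mathbb{S}_n\times\mathbb{S}_n$ when $N=2n$ and $\mathbb{S}_{n+1}\times\mathbb{S}_n$ when $N=2n+1$; here the first factor permutes the odd positions and the second the even positions. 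The generators split accordingly into the odd-indexed $t_i$ and the even-indexed $t_i$, each family being a set of standard Coxeter generators for its symmetric-group factor. Consequently $\mathrm{tl}$ is exactly the Coxeter length function on $\F(1^N,1^N)$ with respect to $\{t_i\}$.

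The key observation is that every term of $\tildeF(1^N,1^N)$ has $b$-degree $\mathrm{tl}(w)$ for some $w\in\F(1^N,1^N)$, and all contributions are non-negative, since the Proposition writes $\tildeF_k(1^N,1^N)=\sum_i \mathcal{E}^N_{k,2i+k}a^i$ with $\mathcal{E}^N_{k,s}\geq 0$. Hence the top $b$-degree equals $k_{\max}:=\max_w \mathrm{tl}(w)$, attained at the longest element $w_0$. Since a finite Coxeter group has a \emph{unique} longest element, there is exactly one $w$ with $\mathrm{tl}(w)=k_{\max}$, so $\mathcal{E}^N_{k_{\max},s}=1$ for $s=\mathrm{sl}(w_0)$ and $0$ otherwise, whence $\tildeF_{k_{\max}}(1^N,1^N)=a^{(\mathrm{sl}(w_0)-k_{\max})/2}$ is a single monomial. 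Additivity of length over direct products gives $k_{\max}=2\binom n2=n(n-1)$ when $N=2n$ and $k_{\max}=\binom{n+1}2+\binom n2=n^2$ when $N=2n+1$, matching the two subscripts in the statement.

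It remains to compute $\mathrm{sl}(w_0)$, the minimal length of $w_0$ in the $s_i$, which is its Coxeter length in $\mathbb{S}_N$, i.e. its number of inversions; here $w_0$ is the explicit permutation reversing the odd positions and reversing the even positions. For $N=2n$, grouping the positions into the blocks $\{2j-1,2j\}$ shows $w_0$ preserves order inside each block and reverses the blocks, so every one of the $\binom n2$ block pairs contributes $4$ inversions, giving $\mathrm{sl}(w_0)=4\binom n2=2n(n-1)$; the $a$-exponent is then $(2n(n-1)-n(n-1))/2=n(n-1)/2$. For $N=2n+1$ the map $i\mapsto N+1-i$ already reverses odds and evens (as $N+1$ is even), so $w_0$ is the full reversal, with $\mathrm{sl}(w_0)=\binom{2n+1}2=n(2n+1)$ and $a$-exponent $(n(2n+1)-n^2)/2=n(n+1)/2$. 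This yields $\tildeF_{n(n-1)}(1^{2n},1^{2n})=a^{n(n-1)/2}$ and $\tildeF_{n^2}(1^{2n+1},1^{2n+1})=a^{n(n+1)/2}$.

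The main obstacle is the structural identification in the first step: one must be sure that $\{t_i\}$ really forms a Coxeter generating system for $\F(1^N,1^N)$, so that $\mathrm{tl}$ is a genuine Coxeter length and therefore attains its maximum at a single element $w_0$; this uniqueness is precisely what forces the leading coefficient to be one monomial rather than a longer polynomial in $a$. Once this is secured the two inversion counts are routine, the only care being to use the correct $w_0$, the odd/even reversal, which coincides with the full reversal only in the odd case.
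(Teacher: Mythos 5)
Your proof is correct and follows essentially the same route as the paper's: both identify the top $b$-degree term of $\tilde{\mathfrak{F}}(1^N,1^N)$ with the longest element of $\mathfrak{F}(1^N,1^N)$ in the $t_i$-generators (the simultaneous reversal of the odd and the even positions), compute its $\mathrm{tl}$ and its $\mathrm{sl}$ as an inversion count, and apply the exponent formula $a^{(\mathrm{sl}(w)-\mathrm{tl}(w))/2}$ from the preceding Proposition. The only difference is one of rigor: you justify via Coxeter theory (uniqueness of the longest element, additivity of length over the direct factors $\mathbb{S}_n\times\mathbb{S}_n$, resp.\ $\mathbb{S}_{n+1}\times\mathbb{S}_n$) why the leading coefficient is a single monomial with coefficient $1$, whereas the paper takes this for granted by simply exhibiting the longest element as an explicit product of $t_i$'s.
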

\begin{proof}
Let $w$ be the  longest element  of $\F\left(1^{2n}|1^{2n}\right)$ in the sense of $\mathrm{tl}(w)$
, then  
\begin{align*}
w&=(t_{1}t_3\cdots t_{2n-3})(t_1t_3\cdots t_{2n-5})\cdots (t_1t_3)t_1
     (t_{2}t_4\cdots t_{2n-2})(t_2t_4\cdots t_{2n-4})\cdots (t_2t_4)t_2\\
&=(2n-1, 2n-3, \cdots, 3, 1)(2n, 2n-2, \cdots, 4, 2)\\
&=(2n-1, 2n, 2n-3, 2n-2, \cdots, 3, 4, 1, 2).
\end{align*}
So $\mathrm{sl}(w)=\mathrm{inv}(2n-3, 2n-2, 2n-5, 2n-4, \cdots, 3, 4, 1, 2)
=2n(n-1)$, and 
\begin{align*}
\tildeF_{n(n-1)}(1^{2n},1^{2n})=a^{n(n-1)-[3n(n-1)-\mathrm{sl}(w)]/2}
=a^{n(n-1)/2}.
\end{align*}
Let $w^\prime$ be the  longest element  of $\F\left(1^{2n+1}|1^{2n+1}\right)$
 in the sense of $\mathrm{tl}(w^\prime)$, then
 \begin{align*}
 w^\prime&=(t_1t_3\cdots t_{2n-1})w\\
 &=(2n+1, 2n-1, 2n-3, \cdots, 3, 1)(2n, 2n-2, \cdots, 4, 2)\\
&=(2n+1, 2n, 2n-1, 2n-2, \cdots, 4, 3, 2, 1), 
 \end{align*}
 which imply that $\mathrm{tl}(w^\prime)=n^2$ and $\mathrm{sl}(w^\prime)=n(2n+1)$. So
 \begin{align*}
\tildeF_{n^2}(1^{2n+1},1^{2n+1})=a^{n^2-[3n^2-n(2n+1)]/2}
=a^{n(n+1)/2}.
\end{align*}
\end{proof}

\begin{lemma}
%\begin{align*}
$
\tildeF_2(1^n,1^n)
=\left\{\begin{array}{lr}\frac{(n-1)(n-4)}2a^2+(n-3)a, &n\geq 4, \\
0, &n\leq 3.
\end{array}\right.
$
%\end{align*}
\end{lemma}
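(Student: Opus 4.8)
The plan is to recast everything in terms of the Coxeter combinatorics of the group $\F(1^n|1^n)$. By Lemma \ref{GroupF} this group is generated by the transpositions $t_i=s_is_{i+1}s_i=(i,i+2)\in\mathbb{S}_n$, and, exactly as in the proof of the Corollary following Lemma \ref{GroupF}, it decomposes as $\mathbb{S}_{\lceil n/2\rceil}\times\mathbb{S}_{\lfloor n/2\rfloor}$, the two factors acting on the odd and even positions respectively with the $t_i$ as their Coxeter generators. Hence $\mathrm{tl}(w)$ is precisely the Coxeter length of $w$ in this product group, while $\mathrm{sl}(w)$ is its length in the ambient $\mathbb{S}_n$, i.e. its inversion number $\mathrm{inv}(w)$. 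By the Proposition expressing $\tildeF_k(1^n,1^n)$ through the numbers $\mathcal{E}_{k,s}^n$, we have $\tildeF_2(1^n,1^n)=\sum_{i\ge 0}\mathcal{E}_{2,2i+2}^n a^i$, so I must list all elements of $t$-length $2$ and record $\mathrm{sl}(w)=\mathrm{inv}(w)$ for each.

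An element of $t$-length $2$ is a reduced product of two distinct generators, and these split into four families: (i) two generators $t_i,t_{i+2}$ of the same parity factor with adjacent indices, whose two orderings give the $3$-cycles $(i,i+2,i+4)^{\pm1}$; (ii) two generators of the same factor with non-adjacent indices, giving a product of disjoint transpositions on well-separated intervals; (iii) one generator from each factor with $|i-j|\ge 3$, again disjoint and well-separated; and (iv) one generator from each factor with $|i-j|=1$, giving the interleaved product $(i,i+2)(i+1,i+3)$. A direct inversion count shows that families (i), (ii), (iii) all satisfy $\mathrm{inv}(w)=6$ (e.g. $\mathrm{inv}((1,3,5))=6$), while the interleaved family (iv) satisfies $\mathrm{inv}(w)=4$ (e.g. $\mathrm{inv}((1,3)(2,4))=4$). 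In particular $\mathrm{sl}(w)\in\{4,6\}$ for every $w$ with $\mathrm{tl}(w)=2$, whence $\mathcal{E}_{2,2}^n=0$ and $\tildeF_2(1^n,1^n)=\mathcal{E}_{2,4}^n\,a+\mathcal{E}_{2,6}^n\,a^2$.

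It then remains to count the two coefficients. The elements with $\mathrm{sl}=4$ are exactly the family (iv) products, one for each pair of consecutive generator indices $(t_k,t_{k+1})$ with $1\le k\le n-3$, so $\mathcal{E}_{2,4}^n=n-3$. For the $\mathrm{sl}=6$ count I would first compute the total number $\mathcal{N}_2$ of $t$-length-$2$ elements from the product structure, using that $\mathbb{S}_N$ has exactly $L_2(N)=\tfrac{(N-2)(N+1)}{2}$ elements of Coxeter length $2$ (two per adjacent generator pair, one per commuting pair) and $N-1$ of length $1$; thus $\mathcal{N}_2=L_2(N_1)+L_2(N_2)+(N_1-1)(N_2-1)$ with $(N_1,N_2)=(\lceil n/2\rceil,\lfloor n/2\rfloor)$. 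Subtracting gives $\mathcal{E}_{2,6}^n=\mathcal{N}_2-(n-3)$, and a short simplification in the even case $n=2m$ and the odd case $n=2m+1$ collapses both to $\tfrac{(n-1)(n-4)}{2}$. Finally, for $n\le 3$ the group $\F(1^n|1^n)$ has order at most $2$ and contains no element of $t$-length $2$, so $\tildeF_2=0$; the case $n=4$ is covered by the general formula, the $a^2$-coefficient $\tfrac{(n-1)(n-4)}{2}$ vanishing there.

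The main obstacle is the case analysis of the second paragraph: one must verify that the interleaved opposite-parity product $(i,i+2)(i+1,i+3)$ is the \emph{only} configuration whose inversion number drops from $6$ to $4$, and that all three remaining families genuinely contribute $6$. This rests on the essential point that $\mathrm{sl}$ is the inversion length in the full $\mathbb{S}_n$ rather than the length inside the subgroup, so that two generators whose supporting intervals $[i,i+2]$ and $[i+1,i+3]$ overlap without sharing a point produce fewer inversions than two generators supported on disjoint intervals.
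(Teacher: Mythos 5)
Your proof is correct, but it runs in the opposite logical direction from the paper's. The paper proves this lemma by recursion in $n$: using the factorization $\mathfrak{S}_n=\mathfrak{S}_{n-1,1}(\mathfrak{S}_{n-1}\otimes\mathrm{id})$ it derives
$\tildeF_2\left(1^n,1^n\right)=\tildeF_2\left(1^{n-1},1^{n-1}\right)+(n-3)a^2+a$
for $n\geq 5$, and telescopes from the base case $\tildeF_2(1^4,1^4)=a$; the values $\mathcal{E}_{2,s}^n$ are then read off afterwards, in the Corollary, from the closed formula together with the Proposition. You instead take the Proposition and Lemma \ref{GroupF} as the starting point, identify $\mathrm{tl}$ with the Coxeter length of $\F(1^n|1^n)\simeq\mathbb{S}_{\lceil n/2\rceil}\times\mathbb{S}_{\lfloor n/2\rfloor}$ and $\mathrm{sl}$ with the inversion number in $\mathbb{S}_n$, enumerate the $t$-length-$2$ elements directly (your four families are exhaustive: same-parity indices differ by $2$ or by at least $4$, opposite-parity indices differ by $1$ or by at least $3$), and verify that only the interleaved products $(i,i+2)(i+1,i+3)$ have inversion number $4$ while all others have $6$ — all of which checks out, and your count $\mathcal{N}_2=L_2(N_1)+L_2(N_2)+(N_1-1)(N_2-1)$ does collapse to $\frac{(n-1)(n-4)}{2}+(n-3)$ in both parities. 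So you effectively prove the paper's Corollary first and deduce the Lemma from it, which is legitimate since the Proposition and Lemma \ref{GroupF} precede this Lemma and do not depend on it. What your route buys is conceptual transparency: the two coefficients are exhibited as honest counts of group elements, with no need to guess or telescope a recursion, and the Corollary comes for free. What the paper's route buys is uniformity: the same recursive scheme is reused verbatim for $\tildeF_3$, $\tildeF_4$, $\tildeF_5$, where a direct enumeration of length-$k$ elements stratified by inversion number would require substantially heavier case analysis; it also avoids leaning on the Proposition, whose own proof in the paper is only sketched via the specialization $a=e=1$.
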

\begin{proof}
It's easy to see $\tildeF_2(1^4,1^4)=a$. 
When $n\geq 5$, then 
\begin{align*}
\tildeF_2\left(1^n,1^n\right)
&=\tildeF_2\left(1^{n-1},1^{n-1}\right)+\tildeF_1\left(1^{n-1},1^{n-3}2^2\right)+\tildeF_0\left(1^{n-1},1^{n-5}2^4\right)\\
&=\tildeF_2\left(1^{n-1},1^{n-1}\right)+\left[a\tildeF_1\left(1^{n-2},1^{n-2}\right)+
     a\tildeF_0\left(1^{n-2},1^{n-2}\right)\right]+a^2\\
&=\tildeF_2\left(1^{n-1},1^{n-1}\right)+\left(n-3\right)a^2+a\\
&=\tildeF_2\left(1^4,1^4\right)+\left(2+3+\cdots+n-3\right)a^2+\left(n-4\right)a\\
&=\frac{\left(n-1\right)\left(n-4\right)}2a^2+\left(n-3\right)a.
\end{align*}
\end{proof}

\begin{corollary}
$
\mathcal{E}_{2,s}^n=\left\{\begin{array}{ll}
\frac{\left(n-1\right)\left(n-4\right)}2, & s=6, n\geq 4,\\
n-3, &s=4, n\geq 4,\\
0, &otherwise. 
\end{array}\right.
$
\end{corollary}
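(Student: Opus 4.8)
The plan is to read off the counting numbers $\mathcal{E}_{2,s}^n$ by matching the two descriptions of $\tildeF_2(1^n,1^n)$ now at hand: the combinatorial expansion furnished by the preceding Proposition and the explicit closed form furnished by the immediately preceding Lemma.

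First I would specialize the Proposition to $k=2$, which yields
$$\tildeF_2(1^n,1^n)=\sum_{i\geq 0}\mathcal{E}_{2,\,2i+2}^n\,a^i=\mathcal{E}_{2,2}^n+\mathcal{E}_{2,4}^n\,a+\mathcal{E}_{2,6}^n\,a^2+\cdots.$$
The point to record here is that only even indices $s=2i+2$ can occur: since each generator $t_j=s_js_{j+1}s_j$ is a product of an odd number of the $s_\ell$, the quantities $\mathrm{tl}(w)$ and $\mathrm{sl}(w)$ always agree in parity, so for $\mathrm{tl}(w)=2$ the value $\mathrm{sl}(w)$ is even and $\mathcal{E}_{2,s}^n=0$ whenever $s$ is odd.

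Next I would invoke the Lemma, which gives $\tildeF_2(1^n,1^n)=\frac{(n-1)(n-4)}{2}a^2+(n-3)a$ for $n\geq 4$ and $\tildeF_2(1^n,1^n)=0$ for $n\leq 3$, and then compare the coefficient of each power of $a$ in the two expressions. The constant term forces $\mathcal{E}_{2,2}^n=0$; the coefficient of $a^1$, which corresponds to $s=2\cdot 1+2=4$, gives $\mathcal{E}_{2,4}^n=n-3$; the coefficient of $a^2$, corresponding to $s=2\cdot 2+2=6$, gives $\mathcal{E}_{2,6}^n=\frac{(n-1)(n-4)}{2}$; and every higher coefficient vanishes, so $\mathcal{E}_{2,s}^n=0$ for all remaining $s$. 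For $n\leq 3$ the vanishing of $\tildeF_2$ immediately forces all $\mathcal{E}_{2,s}^n=0$, which is consistent with the constraint $n\geq 4$ attached to the two nonzero entries.

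Since the whole argument is a coefficient comparison, there is no substantive obstacle: the only care needed is the index bookkeeping, namely confirming that the $a$-degree $i$ is paired with the $s$-index $2i+2$ (hence $i=1\leftrightarrow s=4$ and $i=2\leftrightarrow s=6$) and that the parity remark excludes odd $s$. The genuine content has already been settled in proving the closed form for $\tildeF_2(1^n,1^n)$; the present corollary merely reinterprets its coefficients as the combinatorial numbers $\mathcal{E}_{2,s}^n$.
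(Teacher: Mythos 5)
Your proposal is correct and is essentially the paper's (implicit) argument: the corollary is stated without proof precisely because it follows by comparing coefficients of powers of $a$ in the Proposition's expansion $\tildeF_2(1^n,1^n)=\sum_{i\geq 0}\mathcal{E}_{2,2i+2}^n a^i$ with the closed form $\frac{(n-1)(n-4)}{2}a^2+(n-3)a$ from the preceding Lemma. Your bookkeeping ($i=1\leftrightarrow s=4$, $i=2\leftrightarrow s=6$) and the parity observation ruling out odd $s$ are both sound, so nothing is missing.
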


\begin{lemma}
%\begin{align*}
$
\tildeF_3(1^n,1^n)
=\left\{\begin{array}{ll}
    \frac{(n+1)(n-4)(n-6)}6a^3+\left(n^2-4n-2\right)a^2, & n\geq 6, \\
    3a^2, &n=5,\\
    0, &n\leq 4.
    \end{array}\right.   
$
%\end{align*}
\end{lemma}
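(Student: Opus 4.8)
The plan is to imitate the proof of the preceding lemma (the $\tildeF_2$ case) and run the symmetrizer recursion $\mathfrak{S}_n=\mathfrak{S}_{n-1,1}(\mathfrak{S}_{n-1}\otimes\mathrm{id})$, reading off the coefficient of $b^3$. Symmetrizing the first $n-1$ letters of $v_1^n$ and then sliding the last $v_1$ leftwards produces the master identity $\tildeF(1^n,1^n)=\sum_{j\ge0}b^{\,j}\,\tildeF(1^{n-1},1^{n-1-2j}2^{2j})$ that already underlies the $\tildeF_2$ computation. Extracting the coefficient of $b^3$ gives, for $n$ large enough that every word below has nonnegative exponents,
\[
\tildeF_3(1^n,1^n)=\tildeF_3(1^{n-1},1^{n-1})+\tildeF_2(1^{n-1},1^{n-3}2^2)+\tildeF_1(1^{n-1},1^{n-5}2^4)+\tildeF_0(1^{n-1},1^{n-7}2^6).
\]
Thus the task splits into (a) evaluating the three off-diagonal coefficients and (b) solving the resulting first-order recurrence in $n$.

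First I would settle the extreme term $\tildeF_0(1^{n-1},1^{n-7}2^6)=a^3$: the $b$-free part of an off-diagonal coefficient is carried by the single sliding path that uses only the $11\mapsto22$ crossings, each contributing one factor $a$ and no $b$, and here there are exactly three such crossings. For the middle two terms I would apply the same sliding recursion once more to peel off the trailing block of $2$'s; as in the $\tildeF_2$ proof (where $\tildeF_1(1^{n-1},1^{n-3}2^2)=a\tildeF_1(1^{n-2},1^{n-2})+a\tildeF_0(1^{n-2},1^{n-2})$), this reduces $\tildeF_2(1^{n-1},1^{n-3}2^2)$ and $\tildeF_1(1^{n-1},1^{n-5}2^4)$ to combinations of the diagonal coefficients $\tildeF_k(1^m,1^m)$ with $k\le2$ and $m\le n-2$, all of which are already in closed form from the earlier lemmas. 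Collecting the three contributions, I expect the increment $\tildeF_3(1^n,1^n)-\tildeF_3(1^{n-1},1^{n-1})=\tfrac{n^2-7n+8}{2}a^3+(2n-5)a^2$ for $n\ge7$.

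With this increment I would solve the recurrence by summation. The $a^3$-part sums $\sum\tfrac{m^2-7m+8}{2}$, whose discrete antiderivative factors as $\tfrac{(n+1)(n-4)(n-6)}{6}$, and the $a^2$-part sums $\sum(2m-5)=n^2-4n-2$, up to the seed. The seed and the small cases I would check by hand: for $n\le4$ the group $\F(1^n,1^n)$ has no element of $t$-length $3$, so $\tildeF_3=0$; the truncated $b^3$-extraction at $n=5$ gives $\tildeF_3(1^5,1^5)=\tildeF_2(1^4,1^22^2)+\tildeF_1(1^4,2^4)=3a^2$; and a direct computation at $n=6$ (where the off-diagonal terms must be formed from the genuine short-length values rather than from the closed forms) gives $\tildeF_3(1^6,1^6)=10a^2$, which is exactly the closed form at $n=6$ because the factor $(n-6)$ annihilates the $a^3$-coefficient there. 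Anchoring the recurrence at $n=6$ then propagates the closed form for all $n\ge6$, while $n=5$ stays genuinely exceptional (the closed form would give the spurious $-a^3+3a^2$).

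The main obstacle is the honest evaluation of the off-diagonal coefficients $\tildeF_2(1^{n-1},1^{n-3}2^2)$ and $\tildeF_1(1^{n-1},1^{n-5}2^4)$. Unlike the diagonal $\tildeF_k(1^m,1^m)$, these force one to track the braided slide of the inserted $v_1$ through a word that already carries a block of $2$'s, bookkeeping both the $a$- and $b$-weight picked up at each crossing and summing over all insertion positions; getting the peeling recursion and its length bookkeeping right, so that the closed forms of the earlier lemmas may legitimately be substituted (which fails precisely at the small values), is where the care lies. The remaining difficulty is arithmetical: verifying that the increment $\tfrac{n^2-7n+8}{2}a^3+(2n-5)a^2$ telescopes to the stated factored cubic and quadratic, and that the hand-computed seeds at $n=5,6$ glue onto the general formula.
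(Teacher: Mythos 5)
Your proposal is correct and is essentially the paper's own proof: the paper likewise applies the recursion $\mathfrak{S}_n=\mathfrak{S}_{n-1,1}(\mathfrak{S}_{n-1}\otimes\mathrm{id})$ to $v_1^n$, peels the resulting off-diagonal coefficients down to diagonal ones of shorter words, obtains exactly your increment $\tfrac{n^2-7n+8}{2}a^3+(2n-5)a^2$ for $n\geq 7$, and telescopes from the seed at $n=6$. Your ``master identity'' is just a cleaner packaging of the expansion the paper writes out in a single step (with the two middle terms already peeled into the $a$- and $a^2$-brackets), and your handling of the exceptional values $n\leq 5$ and of the anchor $\tilde{\mathfrak{F}}_3(1^6,1^6)=10a^2$ agrees with the paper's statement.
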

\begin{proof}
When $n\geq 7$, then 
\begin{align*}
\tildeF_3\left(1^n,1^n\right)
&=\tildeF_3\left(1^{n-1},1^{n-1}\right)+\tildeF_0\left(1^{n-1},1^{n-7}2^6\right)
     +a\left[\tildeF_2\left(1^{n-2},1^{n-2}\right)\right.\\
&\quad \left.+\tildeF_1\left(1^{n-2},1^{n-2}\right)
    +\tildeF_0\left(1^{n-2},1^{n-6}2^21^2\right)+\tildeF_0\left(1^{n-3},1^{n-7}1^22^2\right)\right]\\
&\quad+a^2\left[\tildeF_0\left(1^{n-2},1^{n-2}\right)+\tildeF_1\left(1^{n-4},1^{n-4}\right)
    +\tildeF_0\left(1^{n-4},1^{n-4}\right)\right]\\
&=\tildeF_3\left(1^{n-1},1^{n-1}\right)+a^3\\
&\quad    + a\left[\frac{(n-3)(n-6)}{2}a^2+(n-5)a+(n-4)a+a+a\right]\\
&\quad    +a^2\left[1+(n-6)a+1\right]\\
&=\tildeF_3\left(1^{n-1},1^{n-1}\right)+\frac{n^2-7n+8}{2}a^3+(2n-5)a^2\\
&=\tildeF_3\left(1^6,1^6\right)+\sum_{k=7}^n\frac{k^2-7k+8}2a^3+\sum_{k=7}^n(2k-5)a^2\\
&=\frac{(n+1)(n-4)(n-6)}6a^3+\left(n^2-4n-2\right)a^2.
\end{align*}
\end{proof}

\begin{corollary}
%\begin{align*}
$
\mathcal{E}_{3,s}^n=\left\{\begin{array}{ll}
\frac{(n+1)(n-4)(n-6)}6, & s=9, n\geq 6,\\
n^2-4n-2, &s=7, n\geq 5,\\
0, &otherwise. 
\end{array}\right.
$
%\end{align*}
\end{corollary}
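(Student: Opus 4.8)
The plan is to deduce the corollary directly from the preceding Lemma by means of the Proposition, which expresses each polynomial $\tildeF_k(1^n,1^n)$ as a generating function for the refined counts $\mathcal{E}_{k,s}^n$. Specializing that Proposition to $k=3$ gives
\[
\tildeF_3(1^n,1^n)=\sum_{i\geq 0}\mathcal{E}_{3,2i+3}^n\,a^i,
\]
so that the coefficient of $a^i$ in $\tildeF_3(1^n,1^n)$ is precisely $\mathcal{E}_{3,2i+3}^n$. In particular the only values of $s$ that can occur for elements with $\mathrm{tl}(\sigma)=3$ are the odd integers $s=2i+3\geq 3$; for every even $s$, and for $s<3$, one has $\mathcal{E}_{3,s}^n=0$ automatically, which disposes of most of the ``otherwise'' branch before any computation.

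Next I would match coefficients against the three cases of the Lemma. For $n\geq 6$ the Lemma reads $\tildeF_3(1^n,1^n)=\tfrac{(n+1)(n-4)(n-6)}6a^3+(n^2-4n-2)a^2$, with vanishing constant and linear terms. Reading off the coefficient of $a^3$ (which corresponds to $i=3$, i.e.\ $s=9$) and of $a^2$ (corresponding to $i=2$, i.e.\ $s=7$), and noting that the coefficients of $a^0,a^1$ vanish (i.e.\ $s=3,5$), yields exactly $\mathcal{E}_{3,9}^n=\tfrac{(n+1)(n-4)(n-6)}6$, $\mathcal{E}_{3,7}^n=n^2-4n-2$, and $\mathcal{E}_{3,3}^n=\mathcal{E}_{3,5}^n=0$. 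This establishes both nonzero branches for $n\geq 6$.

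It remains to check the thresholds $n=5$ and $n\leq 4$ separately, and here lies the only real subtlety. For $n\leq 4$ the Lemma gives $\tildeF_3(1^n,1^n)=0$, so every $\mathcal{E}_{3,s}^n$ vanishes, consistent with the stated ranges $n\geq 5$ and $n\geq 6$. For $n=5$ the Lemma gives $\tildeF_3(1^5,1^5)=3a^2$, hence $\mathcal{E}_{3,7}^5=3$, which agrees with $n^2-4n-2$ at $n=5$, while $\mathcal{E}_{3,9}^5=0$. The point worth flagging is that the cubic expression $\tfrac{(n+1)(n-4)(n-6)}6$ evaluates to $-1$ at $n=5$ and so cannot itself count anything there; this is exactly why the $s=9$ branch of the corollary must be restricted to $n\geq 6$, and why the $n=5$ case must be read from the Lemma's separate value rather than from the generic polynomial. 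Once these boundary evaluations are recorded, the piecewise description in the corollary follows, since $\mathcal{E}_{3,s}^n$ is a nonnegative integer and every remaining $(k,s,n)$ combination has been shown to contribute zero.
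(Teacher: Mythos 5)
Your proposal is correct and matches the paper's (implicit) argument: the corollary is stated as an immediate consequence of the preceding Lemma, obtained exactly as you do by specializing the Proposition to $k=3$, identifying $\mathcal{E}_{3,2i+3}^n$ with the coefficient of $a^i$ in $\tildeF_3(1^n,1^n)$, and reading off the three cases (including the boundary values $n=5$ and $n\leq 4$). Your remark that the cubic $\tfrac{(n+1)(n-4)(n-6)}{6}$ evaluates to $-1$ at $n=5$, forcing the $s=9$ branch to start at $n\geq 6$, is a worthwhile clarification of a point the paper leaves unstated.
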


\begin{lemma}
When $n\leq 8$, then 
\[
\tildeF_4(1^n,1^n)=\left\{\begin{array}{ll}
17a^2+52a^3+2a^4, &n=8,\\
10a^2+19a^3+a^4,&n=7,\\
4a^2+4a^3,&n=6,\\
a^3,&n=5,\\
0,& n\leq 4.
\end{array}\right.
\]
When $n>8$, then $\tildeF_4(1^n,1^n)$ equals
\[
\frac{(n-7)(n^3-7n^2-14n+96)}{24}a^4
  +\frac{n^3-6n^2-13n+80}2a^3+\frac{n^2-n-22}2a^2 .
\]
\end{lemma}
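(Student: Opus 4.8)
The plan is to follow the recursive scheme already used for $\tildeF_2$ and $\tildeF_3$: derive a one-step recursion expressing $\tildeF_4(1^n,1^n)$ in terms of $\tildeF_4(1^{n-1},1^{n-1})$ plus correction terms built from $\tildeF_0,\tildeF_1,\tildeF_2,\tildeF_3$ at various words, and then telescope. The recursion originates from $\mathfrak{S}_n=\mathfrak{S}_{n-1,1}(\mathfrak{S}_{n-1}\otimes\mathrm{id})$: writing $\mathfrak{S}_{n-1}(v_1^{n-1})=\sum_{y}\tildeF(1^{n-1}\mid y)\,v_y$ and applying the operators $c_{n-j}\cdots c_{n-1}$ that carry the last $v_1$ leftward, I would collect precisely those contributions that return to $v_1^n$ while accumulating exactly four factors of $b$. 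Recall that each crossing of a matched pair $11$ or $22$ contributes $a$ or $1$ and flips the pair, whereas each crossing of a mismatched pair $12$ or $21$ contributes a factor $b$ and leaves positions fixed; so ``four factors of $b$'' means four mismatched crossings are performed by the traveling letter along its path.

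First I would enumerate, for each insertion depth, the intermediate words $y\in\mathcal O(1^{n-1})$ and the reduction identities they require, e.g.\ $\tildeF_0(1^m,1^{m-2r}2^{2r})=a^{r}$ together with the index shifts supplied by Lemma~\ref{tildefxy}, exactly as in the $\tildeF_3$ computation but with one extra unit of $b$-weight to distribute. Substituting the already-established closed forms for $\tildeF_0,\dots,\tildeF_3$ turns the correction into an explicit polynomial $\delta_n$ in $n$ and $a$; summing $\tildeF_4(1^n,1^n)=\tildeF_4(1^8,1^8)+\sum_{k=9}^n\delta_k$ and evaluating the resulting polynomial sums in $n$ should produce the claimed quadratic- and cubic-in-$n$ coefficients of $a^2,a^3$ and the quartic-in-$n$ coefficient of $a^4$.

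Finally I would treat the boundary. The closed formula is valid only once $n$ is large enough that every intermediate word demanded by the recursion actually exists (here $n\ge 9$ suffices for the uniform recursion, with $\tildeF_4(1^8,1^8)=2a^4+52a^3+17a^2$ serving as the base case), so the values for $n\le 8$ must be verified directly. Using $\F(1^n,1^n)\cong\mathbb S_{\lceil n/2\rceil}\times\mathbb S_{\lfloor n/2\rfloor}$ from Lemma~\ref{GroupF} and its Corollary, I would list the elements $\sigma$ with $\mathrm{tl}(\sigma)=4$, record the possible values of $\mathrm{sl}(\sigma)$, and read off $\tildeF_4$ through the Proposition $\tildeF_4(1^n,1^n)=\sum_i\mathcal E^n_{4,2i+4}a^i$. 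These small enumerations give the tabulated polynomials and, in particular, explain why the $a^4$-coefficient $\tfrac{(n-7)(n^3-7n^2-14n+96)}{24}$ fails at $n=7$, where it vanishes although the true count $\mathcal E^7_{4,12}$ equals $1$, while it already agrees at $n=8$.

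I expect the main obstacle to be the bookkeeping in the first step: unlike the $\tildeF_3$ case, distributing four $b$-weights forces many more intermediate words and more reduction branches, and one must be careful neither to double-count the paths of the traveling letter nor to omit the terms in which a matched crossing (contributing $a$ or $1$) is interleaved between the mismatched ones. Organizing these contributions by the depth of insertion and by how the four mismatched crossings are spaced—mirroring the grouping visible in the $\tildeF_3$ display—will be the delicate part; once $\delta_n$ is assembled correctly, the telescoping and the finitely many boundary checks are routine.
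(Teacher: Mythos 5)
Your proposal is correct and is essentially the paper's own proof: the paper derives exactly this recursion from $\mathfrak{S}_n=\mathfrak{S}_{n-1,1}(\mathfrak{S}_{n-1}\otimes\mathrm{id})$, expands the correction into some two dozen summands of the form $a^j\tildeF_i\bigl(1^{n-r},\,\cdot\,\bigr)$, substitutes the closed forms of $\tildeF_0,\dots,\tildeF_3$ to obtain $\delta_n=\frac{n^3-12n^2+29n+36}{6}a^4+\frac{3n^2-15n-6}{2}a^3+(n-1)a^2$, and telescopes from the base value $\tildeF_4(1^8,1^8)=17a^2+52a^3+2a^4$, with the cases $n\le 8$ left to direct verification exactly as you propose. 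The one step you defer---assembling $\delta_n$ without omissions or double counting---is in fact the bulk of the paper's written argument, but your setup, weight bookkeeping, base case, and boundary analysis (including the failure of the $a^4$-coefficient formula at $n=7$, where $\mathcal{E}^7_{4,12}=1$) all agree with the paper.
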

\begin{proof}
We only prove the case $n\geq 9$.
\begin{align*}
\tildeF_4\left(1^n,1^n\right)
&=\tildeF_4\left(1^{n-1},1^{n-1}\right)+\tildeF_0\left(1^{n-1},1^{n-9}2^8\right)
     +a\tildeF_3\left(1^{n-2},1^{n-2}\right)\\
&\quad+a\tildeF_2\left(1^{n-2},1^{n-2}\right)+a\tildeF_0\left(1^{n-2},1^{n-8}2^41^2\right)
     +a\tildeF_0\left(1^{n-3},1^{n-7}2^4\right)\\
&\quad  +a\tildeF_0\left(1^{n-4},1^{n-6}2^2\right)+a^2\tildeF_1\left(1^{n-2},1^{n-2}\right)
     +a^2\tildeF_0\left(1^{n-2},1^{n-2}\right)\\
&\quad +a^2\tildeF_0\left(1^{n-2},1^{n-7}2^21^3\right)+a^2\tildeF_0\left(1^{n-3},1^{n-5}2^2\right)
     +a^2\tildeF_0\left(1^{n-3},1^{n-7}21^22\right)\\
&\quad+a^2\tildeF_2\left(1^{n-4},1^{n-4}\right)+a^2\tildeF_1\left(1^{n-4},1^{n-4}\right)
     +a^2\tildeF_0\left(1^{n-4},1^{n-8}2^21^2\right)\\
&\quad+a^2\tildeF_1\left(1^{n-4},1^{n-4}\right)+\left(a^2+a^3\right)\tildeF_0\left(1^{n-4},1^{n-4}\right)
     +a^2\tildeF_1\left(1^{n-5},1^{n-5}\right)\\
&\quad +a^2\tildeF_0\left(1^{n-5},1^{n-5}\right)+a^2\tildeF_0\left(1^{n-5},1^{n-7}2^2\right)\\
&\quad +a^3\tildeF_1\left(1^{n-6},1^{n-6}\right)+a^3\tildeF_0\left(1^{n-6},1^{n-6}\right)\\
%%%%%%%%%%%%%%%%%%%
&=\tildeF_4(1^{n-1},1^{n-1})+a^4+a\left[\frac{(n-1)(n-6)(n-8)}{6}a^3+(n^2-8n+10)a^2\right]\\
&\quad +a\left[\frac{(n-3)(n-6)}{2}a^2+(n-5)a\right]+a\cdot a^2+a\cdot a^2+a\cdot a\\
&\quad+a^2\cdot (n-4)a+a^2\cdot 1+a^2\cdot a+a^2\cdot a+a^2\cdot a^2\\
&\quad+a^2\left[\frac{(n-5)(n-8)}{2}a^2+(n-7)a\right]+a^2\cdot (n-6)a+a^2\cdot a\\
&\quad+a^2\cdot (n-6)a+\left(a^2+a^3\right)+a^2\cdot (n-7)a\\
&\quad+a^2+a^2\cdot a+a^3\cdot (n-8)a+a^3\\
%%%%%%%%%%%%%%%%%%
&=\tildeF_4\left(1^{n-1},1^{n-1}\right)+\frac{n^3-12n^2+29n+36}{6}a^4
      +\frac{3n^2-15n-6}{2}a^3\\
&\quad +(n-1)a^2\\
&=\tildeF_4\left(1^{8}|1^{8}\right)+\frac{a^4}{6}\sum_{k=9}^n k^3
     +\left(-2a^4+\frac32a^3\right)\sum_{k=9}^n k^2\\
&\quad+\left(\frac{29}{6}a^4-\frac{15}{2}a^3+a^2\right)\sum_{k=9}^n k
     +\left(6a^4-3a^3-a^2\right)(n-9+1)\\
&=\frac{(n-7)(n^3-7n^2-14n+96)}{24}a^4
     +\frac{n^3-6n^2-13n+80}2a^3\\
&\quad+\frac{n^2-n-22}2a^2. 
\end{align*}
\end{proof}

\begin{corollary}
$\mathcal{E}_{4,s}^n=0$, except the following cases. 
\begin{align*}
\mathcal{E}_{4,12}^7&=1,\quad \mathcal{E}_{4,10}^6=4,\quad \mathcal{E}_{4,10}^5=1,\\
\mathcal{E}_{4,s}^n
&=\left\{\begin{array}{ll}
\frac{(n-7)(n^3-7n^2-14n+96)}{24}, &s=12, n\geq 8,\\
\frac{n^3-6n^2-13n+80}2, & s=10, n\geq 7,\\
\frac{n^2-n-22}2, &s=8, n\geq 6.
%0, & otherwise.
\end{array}\right.
\end{align*}
\end{corollary}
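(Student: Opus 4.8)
The plan is to derive and then solve a first-order recursion in $n$ for $\tildeF_4(1^n,1^n)$, in exact parallel with the treatment of $\tildeF_2$ and $\tildeF_3$ in the preceding lemmas. The engine is the identity $\mathfrak{S}_n=\mathfrak{S}_{n-1,1}(\mathfrak{S}_{n-1}\otimes\mathrm{id})$. Applying it to $v_1^n$ and invoking Lemma \ref{tildefxy}(1), I would first expand $\mathfrak{S}_{n-1}(v_1^{n-1})=\sum_{y}\tildeF(1^{n-1},y)\,v_y$ and then let the shuffle operator $\mathfrak{S}_{n-1,1}=\mathrm{id}+c_{n-1}+c_{n-2}c_{n-1}+\cdots+c_1\cdots c_{n-1}$ carry the appended $v_1$ through the word. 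With $e=1$ the braiding contributes a factor $b$ on each mixed pair (rules $c(v_1\otimes v_2)=b\,v_1\otimes v_2$ and $c(v_2\otimes v_1)=b\,v_2\otimes v_1$) and flips $v_1\otimes v_1\mapsto a\,v_2\otimes v_2$, $v_2\otimes v_2\mapsto v_1\otimes v_1$ on diagonal pairs; retaining only those paths that terminate in $v_{1^n}$ turns the master identity into a recursion expressing $\tildeF(1^n,1^n)$ through values $\tildeF(1^{n-1},\cdot)$ at the shifted words that arise along the way.

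The second step is to extract the coefficient of $b^4$, where the total $b$-degree splits additively between the shuffle path and the intermediate word: a path with $j$ mixed crossings carries $b^{j}$ and must be paired with the $b^{4-j}$-part of $\tildeF(1^{n-1},\cdot)$. Hence the relevant inputs are $\tildeF_0,\tildeF_1,\tildeF_2,\tildeF_3$ evaluated on the words produced, which I would supply from the earlier lemmas together with the elementary evaluations $\tildeF_0(1^m,y)=a^{\#\{2\text{'s in }y\}/2}$ (the $b^0$-part merely counts the flips $v_1v_1\mapsto a\,v_2v_2$ needed to reach $y$) and $\tildeF_1(1^m,1^m)=(m-2)a$. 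Simplifying each of the roughly twenty contributing terms then collapses everything to a recursion of the shape
\[
\tildeF_4(1^n,1^n)=\tildeF_4(1^{n-1},1^{n-1})+\frac{n^3-12n^2+29n+36}{6}a^4+\frac{3n^2-15n-6}{2}a^3+(n-1)a^2 .
\]

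The third step is purely mechanical: telescoping this from the base value $\tildeF_4(1^8,1^8)$ and applying the standard formulas for $\sum k$, $\sum k^2$, $\sum k^3$ yields the stated cubic-in-$n$ closed form valid for $n>8$, while the small cases $n\le 8$ are pinned down directly, either by running the same recursion down to its floor or by computing inside $\F(1^n,1^n)\simeq \mathbb{S}_{\lceil n/2\rceil}\times\mathbb{S}_{\lfloor n/2\rfloor}$ furnished by Lemma \ref{GroupF}. I expect the genuine difficulty to reside entirely in the first two steps: correctly enumerating which intermediate words $y$ occur and with which power of $a$ each is weighted, since a single misattributed crossing shifts a polynomial coefficient. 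Once that list of terms is certified, the substitution and summation are routine, and reading off the $a^{i}$-coefficients of the resulting polynomial returns the companion values $\mathcal{E}^n_{4,s}$ recorded in the following corollary.
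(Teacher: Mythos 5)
Your proposal is correct and is essentially the paper's own argument: the paper proves the preceding lemma on $\tildeF_4(1^n,1^n)$ by the same expansion of $\mathfrak{S}_n=\mathfrak{S}_{n-1,1}(\mathfrak{S}_{n-1}\otimes\mathrm{id})$, reducing the resulting intermediate-word terms to the already-computed $\tildeF_0,\dots,\tildeF_3$, arriving at the identical recursion $\tildeF_4(1^n,1^n)=\tildeF_4(1^{n-1},1^{n-1})+\tfrac{n^3-12n^2+29n+36}{6}a^4+\tfrac{3n^2-15n-6}{2}a^3+(n-1)a^2$, and telescoping from the base case $n=8$. The corollary then follows, exactly as you describe, by reading off the coefficient of $a^i$ as $\mathcal{E}^n_{4,2i+4}$ via the proposition $\tildeF_k(1^n,1^n)=\sum_{i}\mathcal{E}^n_{k,2i+k}a^i$.
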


\begin{lemma}
When $n<11$, then 
\begin{align*}
\tildeF_5(1^{n},1^{n})
=\left\{\begin{array}{ll}
10a^5+234a^4+226a^3+4a^2,&n=10,\\
4a^5+96a^4+131a^3+3a^2, &n=9,\\
32a^4+62a^3+2a^2, &n=8,\\
10a^4+19a^3+a^2, &n=7,\\
4a^3, &n=6,\\
0, &n<6. 
\end{array}\right.
\end{align*}
When $n\geq 11$, then 
\begin{align*}
\tildeF_5(1^n,1^n)
&=\frac{n^5 - 20n^4 + 75n^3 + 740n^2 - 5716n + 9360}{120}a^5+(n-6)a^2\\
&\quad+\frac{n^4 - 9n^3 - 34n^2 + 474n - 936}{6}a^4
            +\frac{n^3 - n^2 - 62n+172}{2}a^3.
\end{align*}            
\end{lemma}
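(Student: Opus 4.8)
The plan is to prove this exactly as in the preceding lemmas for $\tildeF_2$, $\tildeF_3$ and $\tildeF_4$: extract a recursion in $n$ for the coefficient of $b^5$ from the defining relation $\mathfrak{S}_n=\mathfrak{S}_{n-1,1}(\mathfrak{S}_{n-1}\otimes\mathrm{id})$, solve it by telescoping, and dispose of the small values of $n$ by direct computation. Writing $v_1^{n}=v_1^{n-1}\otimes v_1$ and using $\mathfrak{S}_{n-1,1}=\mathrm{id}+\mathfrak{S}_{n-2,1}c_{n-1}$, I would apply $\mathfrak{S}_{n-1,1}$ to $\mathfrak{S}_{n-1}(v_1^{n-1})\otimes v_1=\sum_{y}\tildeF(1^{n-1},y)\,v_y\otimes v_1$ and read off the coefficient of $v_{1^{n}}$. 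The identity summand reproduces $\tildeF(1^{n-1},1^{n-1})$, while the summands $c_j\cdots c_{n-1}$ drag the adjoined $v_1$ leftwards; by the braiding of $V_{abe}$ each passage across a $v_2$ contributes a factor $b$ and each conversion $v_1v_1\mapsto a\,v_2v_2$ contributes a factor $a$. Selecting precisely the contributions carrying $b^5$ yields
\[
\tildeF_5(1^n,1^n)=\tildeF_5(1^{n-1},1^{n-1})+R_n,
\]
where $R_n$ is a finite sum of terms $a^j\,\tildeF_i(1^{m},y)$ with $i\leq 4$, the first argument $1^{m}$ ranging over $m=n-1,n-2,\dots$ and $y$ running over the admissible intermediate words (of shapes $1^{p}2^{q}$, $1^{p}2^{q}1^{r}2^{s}$, and so on) produced while depositing the five $b$'s.

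Next I would evaluate every term of $R_n$ using the closed forms already proved for $\tildeF_0,\dots,\tildeF_4$; in particular $\tildeF_0(1^m,y)=a^{\,t/2}$, where $t$ is the (necessarily even) number of $2$'s in $y$. After this substitution $R_n$ collapses to a polynomial increment
\[
R_n=P_4(n)\,a^5+P_3(n)\,a^4+P_2(n)\,a^3+P_1(n)\,a^2,
\]
with $P_j$ a polynomial in $n$ of degree $j$; the leading piece $P_4(n)a^5$ originates from the single most deeply converted word $1^{n-11}2^{10}$, whose existence forces the threshold $n\geq 11$ and accounts for the degree-$5$ (over $5!=120$) shape of the final $a^5$-coefficient. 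Telescoping then gives, for $n\geq 11$,
\[
\tildeF_5(1^n,1^n)=\tildeF_5(1^{10},1^{10})+\sum_{k=11}^{n}R_k,
\]
and the four Faulhaber sums $\sum k$, $\sum k^2$, $\sum k^3$, $\sum k^4$ deliver the stated polynomial. The values for $6\leq n\leq 10$ are genuinely exceptional — the word $1^{n-11}2^{10}$ (and, for still smaller $n$, further words) does not exist, so the generic form of $R_n$ is invalid below the threshold — and must be computed individually, exactly as $n\leq 8$ was handled separately for $\tildeF_4$.

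The main obstacle is the bookkeeping hidden in $R_n$. Relative to $\tildeF_4$ there are strictly more ways to deposit five factors of $b$ along the leftward passage, hence more admissible intermediate words $y$, each carrying its own multiplicity and its own power of $a$; a single omitted or miscounted term will corrupt one coefficient of the final degree-$5$ polynomial. I would organize the enumeration by the shape of $y$ — the number and sizes of its $2$-blocks together with how many conversions have already occurred — and then cross-check the result in two independent ways: against the explicit small values $n=6,\dots,10$, and against the specialization $\tildeF_5(1^{2m},1^{2m})\big|_{a=1}$, which must equal the degree-$5$ coefficient of $(m)_b^!(m)_b^!$ (with the analogous $(m+1)_b^!(m)_b^!$ in the odd case). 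Once the term list is certified correct, the telescoping and the power-sum evaluation are entirely routine.
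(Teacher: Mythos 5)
Your plan is essentially the paper's own proof: the paper derives precisely the recursion you describe, $\tildeF_5(1^n,1^n)=\tildeF_5(1^{n-1},1^{n-1})+R_n$ with $R_n$ a finite combination of terms $a^j\tildeF_i(1^m,y)$ for $i\le 4$, substitutes the closed forms already proved for $\tildeF_2,\tildeF_3,\tildeF_4$, telescopes from the base value $\tildeF_5(1^{10},1^{10})=10a^5+234a^4+226a^3+4a^2$, and evaluates the power sums to obtain the stated polynomial for $n\ge 11$, with the smaller $n$ treated as exceptional cases exactly as you propose. One cosmetic inaccuracy: the quartic-in-$n$ part of the $a^5$ increment comes from $a\,\tildeF_4(1^{n-2},1^{n-2})$ rather than from the word $1^{n-11}2^{10}$, which contributes only a constant (though its existence does mark the threshold $n\ge 11$); this does not affect the structure or validity of the argument.
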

\begin{proof}
We only prove the case $n\geq 11$. 
%When $n\geq 11$, then 
\begin{align*}
%\tildeF_5(1^n,1^n)&=\tildeF_5(1^{n-1},1^{n-1})+a\tildeF_4(1^{n-2},1^{n-2})
%         +a\tildeF_3(1^{n-2},1^{n-2})\\
%&\quad+a^2\tildeF_3(1^{n-4},1^{n-4})
%             +a^2\tildeF_2(1^{n-2},1^{n-2})
%             +2a^2\tildeF_2(1^{n-4},1^{n-4})\\
%&\quad +a^2\tildeF_2(1^{n-5},1^{n-5})
%             +a^3\tildeF_2(1^{n-6},1^{n-6})\\
\tildeF_5\left(1^n,1^n\right)
&=\tildeF_5\left(1^{n-1},1^{n-1}\right)+a\tildeF_4\left(1^{n-2},1^{n-2}\right)
         +a\tildeF_3\left(1^{n-2},1^{n-2}\right)\\
&\quad+a^2\tildeF_3\left(1^{n-4},1^{n-4}\right)
             +a^2\tildeF_2\left(1^{n-2},1^{n-2}\right)
             +2a^2\tildeF_2\left(1^{n-4},1^{n-4}\right)\\
&\quad +a^2\tildeF_2\left(1^{n-5},1^{n-5}\right)
             +a^3\tildeF_2\left(1^{n-6},1^{n-6}\right)\\
&\quad +(2n-15)a^5+(8n-43)a^4+(4n-5)a^3+a^2\\
&=\tildeF_5\left(1^{n-1},1^{n-1}\right)+a\left\{
    \frac{(n-9)(n^3-13n^2+26n+88)}{24}a^4\right.\\
&\quad \left.+\frac{n^3-12n^2+23n+74}{2}a^3
             +\frac{n^2-5n-16}{2}a^2\right\}\\
&\quad +a\left[\frac{(n-1)(n-6)(n-8)}{6}a^3+(n^2-8n+10)a^2\right]\\
&\quad +a^2\left[\frac{(n-3)(n-8)(n-10)}{6}a^3+(n^2-12n+30)a^2\right]\\
&\quad +\frac{(n-7)(n-10)}{2}a^5+(2n^2-24n+67)a^4+(4n-27)a^3\\
&\quad +(2n-15)a^5+(8n-43)a^4+(4n-5)a^3+a^2\\
&=\tildeF_5\left(1^{n-1},1^{n-1}\right)+\frac{n^4-18n^3+71n^2+234n-1272}{24}a^5\\
&\quad+\frac{4n^3-33n^2-37n+498}{6}a^4
             +\frac{3n^2-5n-60}{2}a^3+a^2\\
&=\tildeF_5\left(1^{10},1^{10}\right)+\sum_{k=11}^n \frac{k^4-18k^3+71k^2+234k-1272}{24}a^5\\
&\quad+\sum_{k=11}^n \left[\frac{4k^3-33k^2-37k+498}{6}a^4  
             +\frac{3k^2-5k-60}{2}a^3+a^2\right]\\    
&=10a^5+234a^4+226a^3+4a^2+ (n-10)a^2\\
&\quad +\frac{(n - 10)(n - 2)(n^3 - 8n^2 - 41n + 408)}{120}a^5 \\
&\quad +\frac{(n - 10)(n^3 + n^2 -24n + 234)}{6}a^4 
        +\frac{(n-10)(n^2+9n+28)}{2}a^3\\
&=\frac{n^5 - 20n^4 + 75n^3 + 740n^2 - 5716n + 9360}{120}a^5+(n-6)a^2\\
&\quad+\frac{n^4 - 9n^3 - 34n^2 + 474n - 936}{6}a^4
            +\frac{n^3 - n^2 - 62n+172}{2}a^3.
\end{align*}
\end{proof}

%\begin{remark}
%\end{remark}
\begin{corollary}
$\mathcal{E}_{5,s}^n=0$, except the following cases. 
\begin{align*}
\mathcal{E}_{5,15}^9&=4,\quad \mathcal{E}_{5,13}^8=32,\quad \mathcal{E}_{5,11}^7=19,
\quad \mathcal{E}_{5,11}^6=4,\\
\mathcal{E}_{5,s}^n
&=\left\{\begin{array}{ll}
\frac{n^5 - 20n^4 + 75n^3 + 740n^2 - 5716n + 9360}{120}, &s=15, n\geq 10,\\
\frac{n^4 - 9n^3 - 34n^2 + 474n - 936}{6}, & s=13, n\geq 9,\\
\frac{n^3 - n^2 - 62n+172}{2}, &s=11, n\geq 8,\\
n-6, &s=9, n\geq 7. 
\end{array}\right.
\end{align*}
\end{corollary}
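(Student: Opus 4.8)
The plan is to run the same inductive scheme that produced the closed forms for $\tildeF_2$, $\tildeF_3$, and $\tildeF_4$, now one level higher. The engine is the identity $\mathfrak{S}_n=\mathfrak{S}_{n-1,1}(\mathfrak{S}_{n-1}\otimes\mathrm{id})$ together with the explicit braiding at $e=1$: when the operator $\mathfrak{S}_{n-1,1}=\mathrm{id}+c_{n-1}+\cdots+c_1\cdots c_{n-1}$ bubbles the final letter $v_1$ leftward through a word, it either passes a $v_2$, contributing a factor $b$ and fixing both letters, or collides with a $v_1$, contributing a factor $a$ and flipping the pair via $v_1v_1\mapsto a v_2v_2$. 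Collecting exactly the terms that accumulate five factors of $b$ yields a recursion of the shape
\begin{align*}
\tildeF_5(1^n,1^n)
&=\tildeF_5(1^{n-1},1^{n-1})+a\tildeF_4(1^{n-2},1^{n-2})+a\tildeF_3(1^{n-2},1^{n-2})\\
&\quad+a^2\tildeF_3(1^{n-4},1^{n-4})+a^2\tildeF_2(1^{n-2},1^{n-2})+2a^2\tildeF_2(1^{n-4},1^{n-4})\\
&\quad+a^2\tildeF_2(1^{n-5},1^{n-5})+a^3\tildeF_2(1^{n-6},1^{n-6})+\cdots,
\end{align*}
where the omitted terms are contributions of $\tildeF_0$ and $\tildeF_1$ evaluated on the mixed words produced during bubbling.

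First I would isolate this recursion exactly, in the manner of the opening block of the $\tildeF_4$ proof. The terms organize themselves by the power of $a$ they carry (from $a^0$ up to $a^5$), each power indexing a finite list of mixed words determined by a prescribed interleaving of passes and collisions. Next I would substitute the already-established closed forms for $\tildeF_4$, $\tildeF_3$, $\tildeF_2$ from the preceding lemmas, and evaluate the $\tildeF_0$, $\tildeF_1$ terms on their shifted arguments using $\tildeF_0=1$ and $\tildeF_1(1^m,1^m)=(m-2)a$. These last contributions collapse to the explicit polynomial-in-$n$ correction $(2n-15)a^5+(8n-43)a^4+(4n-5)a^3+a^2$. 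After collecting like powers of $a$ I obtain the single-step increment
\[
\tildeF_5(1^n,1^n)-\tildeF_5(1^{n-1},1^{n-1})
=\frac{n^4-18n^3+71n^2+234n-1272}{24}a^5+\frac{4n^3-33n^2-37n+498}{6}a^4+\frac{3n^2-5n-60}{2}a^3+a^2.
\]

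Finally I would telescope: summing this increment from $k=11$ to $n$ against the base value $\tildeF_5(1^{10},1^{10})=10a^5+234a^4+226a^3+4a^2$ and applying the standard power-sum formulas for $\sum k$, $\sum k^2$, $\sum k^3$, $\sum k^4$ collapses the sum to the stated closed form for $n\ge 11$. The tabulated values for $6\le n\le 10$ are base cases: for these small $n$ several of the mixed words in the recursion do not exist, so they are computed directly rather than from the generic increment.

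The hard part will be Step 1 — pinning down the recursion with no missing or duplicated term. Because each collision flips a pair of letters, it changes whether the \emph{subsequent} braidings in the bubble count as passes or collisions, so the admissible ways of distributing five passes among the collisions proliferate and generate a long list of mixed words (visibly longer than in the $\tildeF_4$ case). Enumerating these interleavings exhaustively and correctly is the genuine obstacle; once the recursion is verified, the substitution of the earlier closed forms and the telescoping summation in Steps 2 and 3 are entirely mechanical.
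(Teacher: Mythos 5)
Your proposal is correct and follows essentially the same route as the paper: the Corollary is nothing but the coefficient-wise restatement of the preceding Lemma on $\tilde{\mathfrak{F}}_5(1^n,1^n)$ through the dictionary $\tilde{\mathfrak{F}}_k(1^n,1^n)=\sum_{i\geq 0}\mathcal{E}_{k,2i+k}^{n}a^{i}$ of the Proposition, and your recursion, the substitution of the closed forms of $\tilde{\mathfrak{F}}_2$, $\tilde{\mathfrak{F}}_3$, $\tilde{\mathfrak{F}}_4$, and the telescoping from $n=11$ against the base value $\tilde{\mathfrak{F}}_5(1^{10},1^{10})=10a^5+234a^4+226a^3+4a^2$ are exactly the steps of the paper's proof of that Lemma; the final extraction $\mathcal{E}_{5,2i+5}^{n}=$ (coefficient of $a^{i}$ in $\tilde{\mathfrak{F}}_5(1^n,1^n)$), which you leave implicit, is immediate. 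One caveat worth recording: carried out faithfully, your computation gives $\tilde{\mathfrak{F}}_5(1^7,1^7)=10a^4+19a^3+a^2$, hence $\mathcal{E}_{5,13}^{7}=10$, a nonzero exceptional value that the Corollary's list omits (its stated cases would force $\mathcal{E}_{5,13}^{7}=0$); this is a discrepancy between the paper's Lemma and its Corollary, not a flaw in your method.
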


\end{document}